\theoremstyle{definition}
\newtheorem{Def}{Definition}[section]
\newtheorem{Lem}[Def]{Lemma}
\newtheorem{Thm}[Def]{Theorem}
\newtheorem{Cor}[Def]{Corollary}
\newtheorem{Rem}[Def]{Remark}
\newcommand{\p}{\mathbb{P}}
\newcommand{\e}{\mathbb{E}}
\newcommand{\real}{\mathbb{R}}
\newcommand{\n}{\mathbb{N}}
\newcommand{\1}{{\bf 1}}
\newcommand{\diffns}{\mathrm{d}}
\begin{document}

\title{Strong rate of convergence for the Euler-Maruyama approximation of SDEs with H\"older continuous drift coefficient}
\author{
Olivier Menoukeu Pamen\footnote{Institute for Financial and Actuarial Mathematics, Department of Mathematical Sciences, University of Liverpool, L69 7ZL, United Kingdom, Email:Menoukeu@liverpool.ac.uk}
~and~
Dai Taguchi\footnote{Ritsumeikan University, 1-1-1 Nojihigashi, Kusatsu, Shiga, 525-8577, Japan, Email: dai.taguchi.dai@gmail.com }
}
\date{}
\maketitle

\begin{abstract}
In this paper, we consider a numerical approximation of the stochastic differential equation (SDE)
\begin{align*}
	X_t=x_0+ \int_0^t b(s, X_s)\diffns s + L_t,~x_0 \in \real^d,~t \in [0,T],
\end{align*}
where the drift coefficient $b:[0,T] \times \real^d \to \real^d$ is H\"older continuous in both time and space variables and the noise $L=(L_t)_{0 \leq t \leq T}$ is a $d$-dimensional L\'evy process.
We provide the rate of convergence for the Euler-Maruyama approximation when $L$ is a Wiener process or a truncated symmetric $\alpha$-stable process with $\alpha \in (1,2)$.
Our technique is based on the regularity of the solution to the associated Kolmogorov equation.
\\\\
\textbf{2010 Mathematics Subject Classification}: 60H35; 41A25; 60H10; 65C30
\\\\
\textbf{Keywords}:
Euler-Maruyama approximation $\cdot$
strong approximation $\cdot$
rate of convergence $\cdot$
H\"older continuous drift $\cdot$
truncated symmetric $\alpha$-stable
\end{abstract}

\section{\large Introduction}
Let $X=(X_t)_{0\leq t \leq T}$ be the unique strong solution to the following $d$-dimensional SDE
\begin{align}\label{SDE_1}
X_t=x_0+ \int_0^t b(s, X_s)\diffns s + L_t,~x_0 \in \real^d,~t \in [0,T],
\end{align}
where $L=(L_t)_{0\leq t \leq T}$ is a $d$-dimensional L\'evy process on a  complete probability space $(\Omega, \mathcal{F},\p)$.
The drift coefficient $b:[0,T] \times \real^d \to \real^d$ is assumed to be $\eta$-H\"older continuous in time with $\eta \in [1/2,1]$ and bounded and $\beta$-H\"older continuous in space, i.e.,
\begin{align*}
\|b\|_{C_b^{\beta}([0,T])} := \sup_{t \in [0,T], x \in \real^d}|b(t,x)|+\sup_{t \in [0,T], x \neq y}\frac{|b(t,x)-b(t,y)|}{|x-y|^{\beta}}<\infty.
\end{align*}
Here $\beta$ satisfies some conditions (see Theorem \ref{main_1.5} and \ref{main_2}).

Consider the Euler-Maruyama approximation of SDE \eqref{SDE_1} given by
\begin{align}\label{EM_1}
   X_t^{(n)}
&= x_0 +\int_0^tb\left(\eta_n(s), X_{\eta _n(s)}^{(n)}\right)\diffns s +L_t,~t \in [0,T],
\end{align}
where $\eta _n(s) = kT/n=:t_k^{(n)}$ if $ s \in \left[kT/n, (k+1)T/n \right)$.
It is well-known (see for example \cite{KP}) that if $L$ is a Wiener process and the coefficient $b$ is Lipschitz continuous in space and $1/2$-H\"older continuous in time then the Euler-Maruyama scheme has strong rate of convergence $1/2$, i.e. for any $p>0$, there exists $C_p>0$ such that
$$\e \left[ \sup_{0 \leq t \leq T}\left|X_t - X^{(n)}_t\right|^p\right] \leq \frac{C_p}{ n^{p/2}}.$$

In general, convergence of numerical methods are studied under the above assumption that the coefficient of the SDE is globally Lipschitz continuous. However, this assumption is not always satisfied for SDEs used in practice (for example mathematical finance, optimal control problem and filtering), this makes the global Lipschitz based idea not immediately applicable. This lead to the study of Euler-Maruyama approximation for SDEs with irregular coefficients.

When $L$ is a Wiener process and $b$ is a continuous function satisfying a linear growth condition, Kaneko and Nakao \cite{KaNa} proved that the unique strong solution of the SDE \eqref{SDE_1} if it exists can be constructed as the limit of the Euler-Maruyama approximation.
However, they do not study the rate of convergence for the Euler-Maruyama scheme.
Hashimoto \cite{Hashimoto} considers Euler-Maruyama approximation for solutions of one dimensional SDE driven by a symmetric $\alpha$-stable process. Under the Komatsu condition (a H\"older type condition), the author shows strong convergence of the Euler-Maruyama approximation. The proof of the result is based on an approximation argument introduce in \cite{Komatsu} and which is very close to the Yamada-Watanabe approximation technique.

In the recent years, there has been a lot of studies on the rate of convergence for the Euler-Maruyama approximation with non-Lipschitz coefficients.
In the diffusion case, assuming that the drift coefficient is the sum of a Lipschitz continuous function and monotone decreasing H\"older continuous function, \cite{Gyongy} gives the order of the strong rate of convergence for one-dimensional SDEs.
This result was generalized in \cite{NT} in $d$-dimension and by considering the class of one-sided Lipschitz drift coefficients.
The proof of results in \cite{Gyongy, NT} are based on a Yamada-Watanabe approximation technique (see \cite{YaWa}).
In the jump-diffusion case, the work \cite{Qiao} shows that when the coefficients are non-Lipschitz, the Euler-Maruyama approximation has a strong convergence. The concept used to prove their result is to the Yamada-Watanabe approach and is based on a  generalized Gronwall inequality.

Let us also mention the work \cite{MP}, where the author prove that if $L$ is a Wiener process and the coefficients are $\beta$-H\"older continuous, then the Euler-Maruyama approximation converges weakly to the unique weak solution of the corresponding SDE with rate $\beta/2$.
This result was extended to the case of nondegenerate SDEs driven by L\'evy processes in \cite{MiZh}. More specifically, the authors study the dependence of the rate on the regularity of the coefficient and the driving noise. The backward Kolmogorov equation plays a crucial role in their argument.

In this paper, we first to study the strong rate of convergence of the Euler-Maruyama scheme \eqref{EM_1} when $L$ is a Wiener process and the drift coefficient $b$ is singular, that is $b$ is bounded and $\beta$-H\"older continuous in space variable. Our method to study the strong rate of convergence differs from the existing ones.
We develop a method based on the regularity of the solution to the Kolmogorov equation associated to the SDE \eqref{SDE_1}.
More precisely, using It\^o's formula, we write the drift part in terms of the solution to the Kolmogorov equation.
Applying the estimates of the derivatives of the solution to the Kolmogorov equation, we are able to obtain the convergence rate (see Theorem \ref{main_1.5}).

Second, we examine the Euler-Maruyama scheme \eqref{EM_1} when the driving noise is a truncated symmetric $\alpha$-stable process with $\alpha \in (1,2)$.
The method is also based on the regularity of the solution to the Kolmogorov equation associated to the SDE \eqref{SDE_1}. The authors are not aware of any other work where the regularity of the Kolmogorov equation is used to study strong convergence of Euler-Maruyama approximation with irregular coefficient. Traditionally, the Kolmogorov equation based method is used to study the weak convergence rate for the Euler-Maruyama approximation (see for example \cite{ProTal}  and references therein) which is for example very important in financial applications.

This paper is divided as follows:
In Section \ref{sec:results}, we introduce some notations and preliminary results on existence and regularity of the Kolmogorov backward equation associated to the SDE \eqref{SDE_1}. The main results of this paper are also given in this section.
Section \ref{sec:proof} is devoted to the proof of the main results.

\section{Preliminaries}\label{sec:results}

\subsection{Notations}

Let $\mathcal{B}(\real^d_0)$ be the Borel-$\sigma$-algebra on $\real^d_0$, with $\real^d_0:=\real^d \setminus \{0\}$ and set $\nabla \equiv D =(\frac{\partial}{\partial x_1},\ldots,\frac{\partial}{\partial x_d} )^\ast$, $D^2=(\frac{\partial^2}{\partial x_i x_j})_{1 \leq i,j \leq d}$ and $\Delta=\sum_{i=1}^d \frac{\partial^2}{\partial x_i^2}$. Here $^\ast$ is the transpose of a vector or matrix. In the following we introduce some space of function:
\begin{itemize}
	\item $C_b(\real^d;\real^k),\,d,k \in \n$ denotes the space of bounded continuous functions from $\real^d $ to $\real^k$. In particular, if $k=1$, we say $C_b(\real^d;\real)=C_b(\real^d)$. For bounded measurable function $f$, the supremum norm of $f$ is defined by $\|f\|_{\infty}:=\sup_{x\in \real^d}|f(x)|$.
	
	\item $C_c^{\infty}(\real^d)$ denotes the space of all infinitely differentiable $\real$-valued functions with compact support contained in $\real$.
	
	\item $C_b^{\beta}(\real^d;\real^k),\,\beta \in (0,1)$ denotes the set of all functions from $\real^d $ to $\real^k$ which are bounded and $\beta$-H\"older continuous functions. Hence if $f \in C_b^{\beta}(\real^d;\real^k)$, then
	\begin{align*}
	\sup_{x,y \in \real^d, x \neq y} \frac{|f(x)-f(y)|}{|x-y|^{\beta}}<\infty.
	\end{align*}
	
	\item $C_b^{i,\beta}(\real^d),\,i=1,2$ and $\beta \in (0,1)$ denotes the space of $i$-times differentiable functions $f:\real^d \to \real$ with $D^{\ell}f \in C_b^{\beta}(\real^d; \real^{\otimes\ell})$ for any $1 \leq \ell \leq i$. A function $f:\real^d \to \real^d$ belongs to $C_b^{i,\beta}(\real^d;\real^d)$ if each it components $f_j$ is in $C_b^{i,\beta}(\real^d)$ for $j=1,\ldots,d$.
	
	\item Let $\mathbf{F}$ be a class of functions and $[a,b]$ be closed interval.  $C([a,b],\mathbf{F})$ denotes the space of all functions $f:[a,b] \times \real^d \to \real^k$ such that $f(t,\cdot) \in \mathbf{F}$ for any $t\in[a,b]$.
	
	\item $C^1([a,b],\mathbf{F})$ denotes the space of all functions $f$ such that $f \in C([a,b],\mathbf{F})$ and $ \frac{\partial f}{\partial t}(t,\cdot)$ exists, is continuous and is in $\mathbf{F}$.
	
	\item For $a<b$, we write $C_b^{\beta}([a,b])$ for $C([a,b];C_b^{\beta}(\real^d;\real^d))$ and define the norm $\|\cdot\|_{C_b^{\beta}([a,b])}$ on $C_b^{\beta}([a,b])$ by
	\begin{align*}
	\|f\|_{C_b^{\beta}([a,b])} := \sup_{v \in [a,b], x \in \real^d}|f(v,x)|+\sup_{v \in [a,b], x \neq y}\frac{|f(v,x)-f(v,y)|}{|x-y|^{\beta}}.
	\end{align*}
\end{itemize}

\subsection{L\'evy processes: definition and basic properties}

In this section, we recall the definition and basic properties of a  L\'evy process.
We also give the definition of truncated symmetric $\alpha$-stable process. For more information on L\'evy processes, we refer the reader to \cite{Applebaum, Bertoin, KimSong, Sato} and references therein.

\begin{Def}
	A stochastic process $L=(L_t)_{0\leq t \leq T}$ on $\real^d$ is called a L\'evy process on probability space $(\Omega, \mathcal{F},\p)$ if the following conditions are satisfied:
\begin{itemize}
	\item[(i)]  For any choice of $n \in \mathbb{N}$ and $0 \leq t_1 < \cdots <t_n$, the random variables $L_{t_0}, L_{t_1}-L_{t_0}, \ldots, L_{t_n}-L_{t_{n-1}}$ are independent.
	\item[(ii)]  $L_0=0$, a.s..
	\item[(iii)] For any $t,s \geq 0$, the distribution of $L_{s+t}-L_{s}$ does not depend on $s$.
	\item[(iv)]  For any $\varepsilon>0$ and $t \geq 0$, $\lim_{s\rightarrow t}\p(|L_{s}-L_{t}|>\varepsilon)=0$.
	\item[(v)] There exists $\Omega _0 \in \mathcal{F}$ with $\p(\Omega _0) =1$ such that, for every $\omega \in \Omega_0 $, $L_t(\omega )$ is right-continuous in $t \geq 0$ and has left limits in $t>0$, i.e. $L_t(\omega )$ is c\`adl\`ag function on $[0, T]$.
\end{itemize}
\end{Def}

Next, we define a Poisson random measure associated to a L\'evy process $L$.
Let $\Delta L_t:=L_t-L_{t-}$ be the jump size of $L$ at time $t$.
We define a \textsl{Poisson random measure} for $L$ on $\mathcal{B}([0,\infty)) \times \mathcal{B}(\real^d_0) $ by
\begin{align*}
N(t,F):=\sum_{0 \leq s \leq t} 1_F(\Delta L_s),~F\in \mathcal{B}(\real^d_0).
\end{align*}
Then the \textsl{L\'evy measure} $\nu$ is defined by $\nu(F)=\e[N(1,F)]$ for $F\in \mathcal{B}(\real^d_0)$.

By the L\'evy-It\^o decomposition (see for example \cite[Theorem 19.2]{Sato}), a L\'evy process $L$ admits the following integral representation:
\begin{align*}
L_t
=bt
+\sigma W_t
+\int_0^t \int_{|z| \leq 1} z \widetilde{N}(\diffns s,\diffns z)+\int_0^t \int_{|z| > 1} z N(\diffns s,\diffns z),
\end{align*}
where $b \in \real^d$, $\sigma \in \real^{d} \times \real^d$, $W=(W_t)_{0\leq t \leq T}$ is a $d$-dimensional Wiener process and  $\widetilde{N}$ defined by
\begin{align*}
\widetilde{N}(\diffns s,\diffns z)=N(\diffns s,\diffns z)-\nu(\diffns z)\diffns s,
\end{align*}
 is the \textsl{compensated Poisson random measure} of $L$.
\begin{Rem}\label{important}
	Let $L=(L_t)_{0\leq t \leq 1}$ be a $d$-dimensional L\'evy process with $\e[|L_t|^{p}]<\infty$ for some $p \geq 2$.
	Then it follows from \cite[Theorem 66]{Protter} that, there exists $C_p>0$ such that for all $t \in [0,1]$, we have
	\begin{align*}
		\e\left[\sup_{0\leq s \leq t}|L_s|^p \right] \leq C_p t.
	\end{align*}
\end{Rem}

A $d$-dimensional ($d \geq 2$) \textsl{truncated symmetric $\alpha$-stable process} $L$ with $\alpha \in (1,2)$ is a L\'evy process such that
\begin{align*}
	\e[e^{i \langle \xi, L_t \rangle}]
	=e^{-t \psi(\xi)},
	~~\xi \in \real^d.
\end{align*}
Here the function $\psi$ is the characteristic exponent of $L$ and is given by
\begin{align*}
	\psi(\xi)
	=\mathcal{A}(d,-\alpha) \int_{|z| \leq 1} \left\{1-\cos(\langle \xi,z \rangle) \right\} \nu(\diffns z),~~
\end{align*}
with
$$
	\mathcal{A}(d,-\alpha):=\frac{\alpha 2^{\alpha-1} \Gamma((d+\alpha)/2)}{\pi^{d/2} \Gamma(1-\alpha/2)}
$$
and the L\'evy measure given by
\begin{align*}
	\nu(\diffns z)
	=\frac{1(|z|\leq 1)}{|z|^{d+\alpha}} \diffns z.
\end{align*}

It follows from the L\'evy-It\^o decomposition, a truncated symmetric $\alpha$-stable process $L$ admits the following integral representation:
\begin{align}\label{deco:1}
	L_t=\int_0^t \int_{|z|\leq 1} z \widetilde{N}(\diffns s,\diffns z).
\end{align}
Note that for any $n \in \n$, $\int_{|z| \geq 1} |z|^n \nu(dz)=0$. Then using \cite[Theorem 2.5.2]{Applebaum}, the $n$-th moment of $L_t$ is finite for any $t\geq 0$. In particular, the second moment is finite. This means that a truncated symmetric $\alpha$-stable process $L$ is a square integrable L\'evy process.
By the integral representation \eqref{deco:1} and the It\^o isometry, we conclude
\begin{align*}
	\e[|L_t|^2]
	=\int_0^t \int_{|z| \leq 1} |z|^2 \nu(\diffns z) \diffns s
	=t \int_{|z| \leq 1} \frac{1}{|z|^{d+\alpha-2}} \diffns y
	= ct,
\end{align*}
for some constant $c$.

Finally the infinitesimal generator $\mathcal{L}$ of a truncated symmetric $\alpha$-stable $L$ is given by
\begin{align*}
\mathcal{L}f(x):=\int_{|z|\leq 1} \big( f(x+z)-f(x) - \langle z, Df(x) \rangle \big) \nu(\diffns z),~f\in C_c^{\infty}(\real^d).
\end{align*}

\subsection{Results on existence of solutions to the Kolmogorov equations}

In this section, we present some auxiliary results on existence and regularity of solutions to the Kolmogorov equations. Further, we derive some explicit expressions of the constants in the upper bounds of the derivative of these solutions.
These results will play a crucial role in the proof of our main theorems.



\subsubsection{Case of Wiener process}

The following result corresponds to \cite[Theorem 2.8]{Flandoli} provides regularity of solution to the Kolmogorov equation associated to the SDE \eqref{SDE_1} when the driving process is a Wiener process.

\begin{Thm}\label{thm:1}
Assume that $L=W$ be a $d$-dimensional Wiener process.
Let $t_1 \in [0,T]$.
For all $\varphi \in C([0,t_1];C_b^{\beta}(\real^d;\real^d))$, there exists at least one solution $u$ to the backward Kolmogorov equation
\begin{align}\label{Kolmo_wiener}
\frac{\partial u}{\partial t}  + \nabla u \cdot b + \frac{1}{2} \Delta u=-\varphi \text{ on } [0,t_1] \times \real^d,~u(t_1,x)=0
\end{align}
of class
\begin{align*}
u \in C([0,t_1];C_b^{2,\beta'}(\real^d;\real^d)) \cap C^1([0,t_1];C_b^{\beta'}(\real^d;\real^d))
\end{align*}
for all $\beta' \in (0,\beta)$ with
\begin{align*}
\|D^2u\|_{C_b^{\beta'}([0,t_1])} \leq C_{\beta'}\|\varphi\|_{C_b^{\beta}([0,t_1	])}
\end{align*}
and
\begin{align*}
\|\nabla u\|_{C_b^{\beta}([0,t_1])} \leq C(t_1) \|\varphi\|_{C_b^{\beta}([0,t_1])} \text{ with } \lim_{t_1 \to 0} C(t_1)=0.
\end{align*}
\end{Thm}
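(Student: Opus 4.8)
The plan is to reverse time to obtain a forward Cauchy problem, recast it in mild (Duhamel) form driven by the heat semigroup $P_t=e^{(t/2)\Delta}$, and solve it by a fixed-point argument whose contraction rests on the smoothing of $P_t$ in H\"older spaces. Writing $w(\tau,x):=u(t_1-\tau,x)$, $\widetilde b(\tau,\cdot):=b(t_1-\tau,\cdot)$ and $\widetilde\varphi(\tau,\cdot):=\varphi(t_1-\tau,\cdot)$, the backward equation \eqref{Kolmo_wiener} becomes
\[
\partial_\tau w=\tfrac12\Delta w+\widetilde b\cdot\nabla w+\widetilde\varphi,\qquad w(0,\cdot)=0,
\]
with mild form $w(\tau)=\int_0^\tau P_{\tau-s}[\widetilde b(s)\cdot\nabla w(s)+\widetilde\varphi(s)]\diff s$. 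Fixing $\beta'\in(0,\beta)$, I would seek $w\in C([0,t_1];C_b^{1,\beta'}(\real^d;\real^d))$, which is the minimal regularity making the source term meaningful.

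The two analytic inputs I would isolate, writing $p_t$ for the Gaussian kernel and using $\int Dp_t=\int D^2p_t=0$ together with $\|D^kp_t\|_{L^1}\le Ct^{-k/2}$, are: (i) $\|\nabla P_tf\|_{C_b^\beta}\le Ct^{-1/2}\|f\|_{C_b^\beta}$; and (ii) $\|D^2P_tf\|_\infty\le Ct^{-1+\beta'/2}[f]_{\beta'}$, the latter from $D^2P_tf(x)=\int D^2p_t(y)(f(x-y)-f(x))\diff y$, where $[\cdot]_{\beta'}$ is the H\"older seminorm. Estimate (i) produces the factor vanishing as $t_1\to0$: since $\int_0^\tau(\tau-s)^{-1/2}\diff s=2\sqrt\tau$, the gradient of the Duhamel integral is controlled by $\sqrt{t_1}$ times the supremum of the source. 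The exponent in (ii) is time-integrable, $\int_0^\tau(\tau-s)^{-1+\beta'/2}\diff s=\tfrac{2}{\beta'}\tau^{\beta'/2}$, which yields the $L^\infty$ bound on $D^2w$; the $\beta'$-H\"older seminorm of $D^2w$ instead requires the classical parabolic Schauder argument (splitting the time integral at $\tau-s\sim|x-x'|^2$ and using the $L^\infty$ bound on $D^3P_t$ for the near part, the H\"older bound for the far part), which I would invoke to obtain $\|D^2w\|_{C_b^{\beta'}}\le C_{\beta'}\sup_s\|F(s)\|_{C_b^{\beta'}}$ with $F=\widetilde b\cdot\nabla w+\widetilde\varphi$ and $C_{\beta'}$ independent of $t_1$.

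Next I would run the fixed point. Define $\Phi(g)(\tau):=\int_0^\tau P_{\tau-s}[\widetilde b(s)\cdot\nabla g(s)+\widetilde\varphi(s)]\diff s$ on $C([0,t_1];C_b^{1,\beta'})$. Because $\beta'<\beta$, the H\"older product rule gives $\|\widetilde b(s)\cdot\nabla g(s)\|_{C_b^{\beta'}}\le C\|b\|_{C_b^\beta([0,T])}\|\nabla g(s)\|_{C_b^{\beta'}}$, so $F\in C([0,t_1];C_b^{\beta'})$; and by (i), $\sup_\tau\|\nabla(\Phi g_1-\Phi g_2)(\tau)\|_\infty\le 2C\|b\|_\infty\sqrt{t_1}\,\sup_\tau\|\nabla(g_1-g_2)(\tau)\|_\infty$. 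Hence for $t_1$ so small that $C\|b\|_{C_b^\beta([0,T])}\sqrt{t_1}<1$, $\Phi$ is a contraction with a unique fixed point $w$; for general $t_1\in[0,T]$ one patches finitely many short intervals of uniform length, which is legitimate since $\|b\|_\infty$ is global. Reading the bounds off (i)--(ii) gives $\|D^2u\|_{C_b^{\beta'}([0,t_1])}\le C_{\beta'}\|\varphi\|_{C_b^\beta([0,t_1])}$; since then $D^2u$ is bounded, $\nabla u$ is Lipschitz, hence $\nabla u\in C_b^\beta$ and the source lies in $C_b^\beta$, so applying (i) in the $C_b^\beta$ norm yields $\|\nabla u\|_{C_b^\beta([0,t_1])}\le C(t_1)\|\varphi\|_{C_b^\beta([0,t_1])}$ with $C(t_1)=O(\sqrt{t_1})\to0$. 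Finally $\partial_t u=-\tfrac12\Delta u-b\cdot\nabla u-\varphi$ is continuous with values in $C_b^{\beta'}$, giving $u\in C^1([0,t_1];C_b^{\beta'})$.

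The main obstacle is the low regularity of $b$: being only $\beta$-H\"older, the first-order term cannot be absorbed by a Lipschitz/Gronwall scheme and must be controlled purely through the smoothing of $P_t$, which forces the loss $\beta'<\beta$ in the product estimate and the regularizing weights $t^{-1/2}$ and $t^{-1+\beta'/2}$. The genuinely delicate point is the $\beta'$-H\"older estimate on $D^2w$ (the Schauder estimate proper), where naive integration of $(\tau-s)^{-1}$ diverges; this is where I would either invoke classical parabolic Schauder theory or reproduce the kernel-splitting argument, and it is also where the explicit $t_1$-independent constant $C_{\beta'}$ must be tracked carefully.
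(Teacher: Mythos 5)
Your proposal is correct, and it is worth noting how it sits relative to the paper: the paper never proves Theorem \ref{thm:1} at all --- it imports it wholesale from Flandoli's lecture notes (Theorem 2.8 there), and the only argument actually written out in the paper is the proof of Lemma \ref{const_wiener}, which re-runs Flandoli's construction to extract $C(t_1)=C_0t_1^{1/2}$. Your mild-solution/Banach-fixed-point argument is the same machinery in semigroup language: your map $\Phi$ is precisely the iteration $u^{(0)}=0$, $u^{(n+1)}$ solving the heat equation with source $-(b\cdot\nabla)u^{(n)}-\varphi$ that the paper uses, and your smoothing estimate (i) is the paper's probabilistic gradient bound (Gaussian integration by parts, $\partial_{x_i}\e[\varphi(s,x+W_{t-s})]=-\e[\varphi(s,x+W_{t-s})W^i_{t-s}]/(t-s)$), which produces the same $\int_0^\tau(\tau-s)^{-1/2}\,\mathrm{d}s=2\sqrt{\tau}$ factor and the same smallness condition $C_0\|b\|_{C_b^{\beta}([0,T])}t_1^{1/2}\leq 1/4$. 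The differences are: you close the argument by contraction, which additionally yields uniqueness of the mild solution in $C([0,t_1];C_b^{1,\beta'})$, whereas the paper/Flandoli only establishes uniform bounds and passes to a convergent subsequence (hence ``at least one solution''); your bootstrap from boundedness of $D^2u$ to $\nabla u\in C_b^{\beta}$ (not merely $C_b^{\beta'}$), needed to state the gradient bound in the $C_b^{\beta}$ norm, is handled correctly and is a point the paper glosses over entirely. Both treatments defer the genuinely delicate step --- the $\beta'$-H\"older Schauder estimate $\|D^2u\|_{C_b^{\beta'}([0,t_1])}\leq C_{\beta'}\|\varphi\|_{C_b^{\beta}([0,t_1])}$ --- to classical parabolic theory (the paper to Flandoli, you to the kernel-splitting argument), so neither is self-contained there; flagging it explicitly, as you do, is the right call. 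Two small tightenings: the contraction should be stated in the full $\sup_\tau\|\cdot\|_{C_b^{1,\beta'}}$ metric rather than only for $\sup_\tau\|\nabla(g_1-g_2)(\tau)\|_\infty$ (immediate from your product estimate in $C_b^{\beta'}$), and in the patching step for general $t_1$ the terminal data on the earlier subintervals is nonzero, so the mild form acquires a term $P_{\tau-\tau_0}w(\tau_0)$; since $\nabla P_t$ commutes with $P_t\nabla$ on $C_b^{1,\beta'}$ data this causes no singularity, but it should be said.
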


In the following Lemma we make precise the function $C(t_1)$ derived in Theorem \ref{thm:1}.
\begin{Lem}\label{const_wiener}
	The constant $C(t_1)$ defined in Theorem \ref{thm:1} is given by $C(t_1)=C_0 t_1^{1/2}$ for some constant $C_0$ and $t_1 \in (0,T]$ with $\|b\|_{C_b^{\beta}([0,T])} C_0 t_1^{1/2} \leq 1/4$.
\end{Lem}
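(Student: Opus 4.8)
We must pin down the $t_1$-dependence of the constant $C(t_1)$ in Theorem~\ref{thm:1}, which controls $\|\nabla u\|_{C_b^{\beta}([0,t_1])}$. The key observation is that the solution $u$ to the backward Kolmogorov equation \eqref{Kolmo_wiener} with terminal condition $u(t_1,\cdot)=0$ can be represented through Duhamel's formula (the mild/variation-of-constants form) as
\begin{align*}
u(t,x) = \int_t^{t_1} P_{s-t}\big(\nabla u(s,\cdot)\cdot b(s,\cdot) + \varphi(s,\cdot)\big)(x)\diff s,
\end{align*}
where $(P_r)_{r\geq 0}$ denotes the heat semigroup generated by $\tfrac12\Delta$. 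First I would apply $\nabla$ to this representation and use the well-known smoothing (gradient) estimate for the heat semigroup, namely $\|\nabla P_r f\|_{C_b^{\beta}}\leq C r^{-1/2}\|f\|_{C_b^{\beta}}$, which costs a factor $r^{-1/2}$ but leaves the $\beta$-Hölder norm controlled. This is the step that produces the $t_1^{1/2}$ scaling, since $\int_0^{t_1} r^{-1/2}\diff r = 2\,t_1^{1/2}$.

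**Key steps.**
Carrying out the gradient estimate on each summand and taking $C_b^{\beta}([0,t_1])$-norms yields a bound of the schematic form
\begin{align*}
\|\nabla u\|_{C_b^{\beta}([0,t_1])} \leq C_0\, t_1^{1/2}\Big(\|b\|_{C_b^{\beta}([0,T])}\,\|\nabla u\|_{C_b^{\beta}([0,t_1])} + \|\varphi\|_{C_b^{\beta}([0,T])}\Big).
\end{align*}
The term involving $\nabla u$ on the right arises because the drift coefficient $b$ multiplies $\nabla u$ inside the Duhamel integral; here one uses that $C_b^{\beta}$ is a Banach algebra so that $\|(\nabla u\cdot b)(s,\cdot)\|_{C_b^{\beta}}\lesssim \|b\|_{C_b^{\beta}([0,T])}\|\nabla u(s,\cdot)\|_{C_b^{\beta}}$, uniformly in $s$. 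The next step is to absorb this term: choosing $t_1$ small enough that $\|b\|_{C_b^{\beta}([0,T])}C_0 t_1^{1/2}\leq 1/4$ guarantees $C_0 t_1^{1/2}\|b\|_{C_b^{\beta}([0,T])}\leq 1/4<1$, so the self-referential term can be moved to the left-hand side, leaving
\begin{align*}
\tfrac{3}{4}\,\|\nabla u\|_{C_b^{\beta}([0,t_1])} \leq C_0\, t_1^{1/2}\,\|\varphi\|_{C_b^{\beta}([0,T])},
\end{align*}
which is exactly the asserted bound with $C(t_1)=C_0 t_1^{1/2}$ (after adjusting the constant by the harmless factor $4/3$), and manifestly $\lim_{t_1\to 0}C(t_1)=0$.

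**Main obstacle.**
The delicate point is making the Duhamel/mild-solution representation rigorous for the solution already produced by Theorem~\ref{thm:1} and justifying the interchange of $\nabla$ with the time integral, since $\varphi$ is only Hölder (not smooth) in space and the solution has only $C_b^{2,\beta'}$ spatial regularity. One should either argue by the uniqueness in the stated solution class that the constructed $u$ coincides with the mild solution, or carry out the estimate on a regularized problem (mollifying $\varphi$ and $b$) and pass to the limit using the uniform-in-regularization bounds. The Hölder-space smoothing estimate $\|\nabla P_r f\|_{C_b^{\beta}}\leq C r^{-1/2}\|f\|_{C_b^{\beta}}$ is standard, but one must verify that the implied constant $C_0$ is genuinely independent of $t_1$ (it depends only on $\beta$ and the dimension), which is what allows the smallness condition on $t_1$ to do its work uniformly.
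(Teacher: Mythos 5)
Your proposal is correct in substance and relies on exactly the same analytic engine as the paper: the gradient smoothing estimate for the heat semigroup in $C_b^{\beta}$, whose $r^{-1/2}$ singularity integrates to the factor $t_1^{1/2}$, combined with the smallness condition $\|b\|_{C_b^{\beta}([0,T])}C_0 t_1^{1/2}\leq 1/4$ to control the drift term. Where you differ is in how the estimate is attached to the solution $u$. You work directly with $u$ through the Duhamel (mild) formulation and close a self-referential inequality by absorption; this forces you to justify (i) that the solution of Theorem \ref{thm:1} actually satisfies the mild formula and (ii) that $\|\nabla u\|_{C_b^{\beta}([0,t_1])}$ is finite a priori, so that the absorbed term may legitimately be moved to the left-hand side --- precisely the obstacle you flag. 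Both points are in fact available: since $u\in C([0,t_1];C_b^{2,\beta'}(\real^d;\real^d))\cap C^1([0,t_1];C_b^{\beta'}(\real^d;\real^d))$ is a classical solution, Duhamel follows by differentiating $s\mapsto P_{s-t}u(s,\cdot)$ and integrating in $s$, and boundedness of $\nabla u$ and $D^2u$ gives finiteness of the $\beta$-H\"older norm of $\nabla u$. The paper proceeds differently: it first proves the smoothing estimate for the heat equation by hand, via the probabilistic representation $v(t,x)=\int_0^t\e[\varphi(s,x+W_{t-s})]\diffns s$ and the Gaussian identity $\frac{\partial}{\partial x_i}\e[\varphi(s,x+W_{t-s})]=-\frac{1}{t-s}\e[\varphi(s,x+W_{t-s})W_{t-s}^i]$, then runs the Picard iteration $u^{(n+1)}$ with source $(b\cdot\nabla)u^{(n)}+\varphi$, obtains the uniform bound $\|\nabla u^{(n+1)}\|_{C_b^{\beta}([0,t_1])}\leq 2C(t_1)\|\varphi\|_{C_b^{\beta}([0,t_1])}$ by unrolling the same absorption as a geometric series, and finally transfers the bound to $u$ using the fact, built into Flandoli's construction, that a subsequence satisfies $\nabla u^{(n_k)}\to\nabla u$ in $C_b^{\beta}([0,t_1])$. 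The paper's route avoids your obstacles (i) and (ii) entirely, since the estimate is proved on iterates for which everything is explicit, at the price of being tied to the particular solution produced by that construction; your route is shorter and yields the bound for every solution in the stated class, but the Duhamel justification and the a priori finiteness must be supplied for the argument to be complete.
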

\begin{proof}
	Let us first prove that if $v$ is the solution to the heat equation
	\begin{align}\label{heat_eq_1}
	\frac{\partial v}{\partial t}  = \frac{1}{2} \Delta v+\varphi \text{ on } [0,t_1] \times \real^d,~v(0,x)=0,
	\end{align}
	with $\varphi \in C([0,t_1];C_b^{\beta}(\real^d;\real^d))$, then it holds that
	\begin{align}\label{heat_eq_2}
	\|\nabla v\|_{C_b^{\beta}([0,t_1])}
	\leq C(t) \|\varphi\|_{C_b^{\beta}([0,t_1])}.
	\end{align}
It is shown in \cite[Theorem 2.3]{Flandoli} that
	\begin{align*}
	v(t,x)=\int_{0}^{t} \e[\varphi(s,x+W_{t-s})]\diffns s, t \in [0,t_1]
	\end{align*}
	is a solution to the equation \eqref{heat_eq_1} and
	\begin{align*}
	\frac{\partial}{\partial x_i} \e[\varphi(s,x+W_{t-s})]
	=-\frac{1}{t-s} \e[\varphi(s,x+W_{t-s}) W_{t-s}^i],
	\end{align*}
	where $W_t^i$ is the $i$-th coordinate of $W_t$.
	Since the function $|x|\exp(-|x|^2)$ is bounded, there exists $\widetilde{C}_0>0$ such that
	\begin{align}\label{heat_eq_31}
	|\nabla v(t,x)|
	\leq \widetilde{C}_0 \sup_{s \in [0,t_1],x \in \real^d}|\varphi(s,x)| \int_{0}^{t} \frac{1}{\sqrt{t-s}}\diffns s
	\leq C_0 t_1^{1/2} \|\varphi\|_{C_b^{\beta}([0,t_1])},
	\end{align}
	where $C_0=2\widetilde{C}_0$.
	Similarly, we get
	\begin{align}\label{heat_eq_41}
	\frac{|\nabla v(t,x)-\nabla v(t,y)|}{|x-y|^{\beta}}
	\leq \widetilde{C}_0 \sup_{s \in [0,t_1],x\neq y}\frac{|\varphi(s,x)-\varphi(s,y)|}{|x-y|^{\beta}} \int_{0}^{t} \frac{1}{\sqrt{t-s}}\diffns s
	\leq C_0 t_1^{1/2} \|\varphi\|_{C_b^{\beta}([0,t_1])}.
	\end{align}
	Combining \eqref{heat_eq_31} and \eqref{heat_eq_41}, we conclude that \eqref{heat_eq_2} holds.
	
	Next, set $u^{(0)}=0$ and for $n \in \n \cup \{0\}$, let $u^{(n+1)}$ be a solution to
	\begin{align*}
	\frac{\partial u^{(n+1)}}{\partial t}  + \frac{1}{2} \Delta u^{(n+1)}=-(b\cdot \nabla)u^{(n)} - \varphi \text{ on } [0,t_1] \times \real^d,~u^{(n+1)}(t_1,x)=0.
	\end{align*}
	Define $v^{(n)}, \,n \in \n \cup \{0\}$ by $v^{(n)}:=(b\cdot\nabla)u^{(n)}+\varphi$.
	From \eqref{heat_eq_2}, we have
	$$\|\nabla u^{(n+1)}\|_{C_b^{\beta}([0,t_1])} \leq C(t_1)\|\nabla v^{(n)}\|_{C_b^{\beta}([0,t_1])}$$
	 for any $n\in\n \cup \{0\}$.
	Choose $t_1$ with $\|b\|_{C_b^{\beta}([0,T])} C(t_1) \leq 1/4$, we have
	\begin{align*}
	\|\nabla v^{(n)}\|_{C_b^{\beta}([0,t_1])}
	&\leq \|\varphi\|_{C_b^{\beta}([0,t_1])}+2\|b\|_{C_b^{\beta}([0,t_1])} \|\nabla u^{(n)}\|_{C_b^{\beta}([0,t_1])}\\
	&\leq \|\varphi\|_{C_b^{\beta}([0,t])}+2\|b\|_{C_b^{\beta}([0,T])} C(t_1) \|\nabla v^{(n-1)}\|_{C_b^{\beta}([0,t_1])}\\
	&\leq \cdots
	\leq \sum_{i=1}^{n}\frac{1}{2^{i}} \|\varphi\|_{C_b^{\beta}([0,t_1])}
	<2\|\varphi\|_{C_b^{\beta}([0,t_1])}.
	\end{align*}
	Hence, it holds that
	\begin{align*}
	\|\nabla u^{(n+1)}\|_{C_b^{\beta}([0,t_1])} \leq 2C(t_1)\|\varphi\|_{C_b^{\beta}([0,t_1])}.
	\end{align*}
	On the other hand, we know from \cite[Theorem 2.8]{Flandoli} that there exists a subsequence $(u^{(n_k)})_{k \in \n}$ of $(u^{(n)})_{n \in \n}$ that converges uniformly in $(t,x) \in [0,t_1] \times \real^d$ to some $u$ such that $u$ is a solution to the backward Kolmogorov equation \eqref{Kolmo_wiener} and $\nabla u^{(n_k)}$ converges to $\nabla u$ in $C_b^{\beta}([0,t_1])$. Hence the result follows.
\end{proof}


Similar arguments as in the proof of Theorem \ref{thm:1} and Lemma \ref{const_wiener} lead to the following corollary:

\begin{Cor}\label{cor:1}
	Let $0 \leq t_1<t_2 \leq T$ with $\|b\|_{C_b^{\beta}([0,T])} C_0\cdot (t_2-t_1)^{1/2} \leq 1/4$.
	For all $\varphi \in C([t_1,t_2];C_b^{\beta}(\real^d;\real^d))$, there exists at least one solution $u$ to the backward Kolmogorov equation
	\begin{align*}
	\frac{\partial u}{\partial t}  + \nabla u \cdot b + \frac{1}{2} \Delta u=-\varphi \text{ on } [t_1,t_2] \times \real^d,~u(t_2,x)=0
	\end{align*}
	of class
	\begin{align*}
	u \in C([t_1,t_2];C_b^{2,\beta'}(\real^d;\real^d)) \cap C^1([t_1,t_2];C_b^{\beta'}(\real^d;\real^d))
	\end{align*}
	for all $\beta' \in (0,\beta)$ with
	\begin{align*}
	\|D^2u\|_{C_b^{\beta'}([t_1,t_2])} \leq M \|\varphi\|_{C_b^{\beta}([t_1,t_2])}
	\end{align*}
	for some constant $M$ and
	\begin{align*}
	\|\nabla u\|_{C_b^{\beta}([t_1,t_2])} \leq C_0(t_1-t_2)^{1/2} \|\varphi\|_{C_b^{\beta}([t_1,t_2])}
	\end{align*}
	for some constant $C_0$.
\end{Cor}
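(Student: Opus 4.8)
The plan is to reduce the problem on the general interval $[t_1,t_2]$ to the already-solved problem on an interval of the form $[0,\tau]$, exploiting the fact that the second-order part $\tfrac12\Delta$ of the operator does not depend on time. Set $\tau:=t_2-t_1$ and introduce the time-shifted data $\tilde b(s,x):=b(s+t_1,x)$ and $\tilde\varphi(s,x):=\varphi(s+t_1,x)$ for $(s,x)\in[0,\tau]\times\real^d$. Since a shift of the time variable leaves the spatial H\"older seminorms untouched and merely translates the supremum over time, one has $\|\tilde b\|_{C_b^{\beta}([0,\tau])}=\|b\|_{C_b^{\beta}([t_1,t_2])}\le\|b\|_{C_b^{\beta}([0,T])}$ and likewise $\|\tilde\varphi\|_{C_b^{\beta}([0,\tau])}=\|\varphi\|_{C_b^{\beta}([t_1,t_2])}$.

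First I would apply Theorem \ref{thm:1} together with Lemma \ref{const_wiener} on the interval $[0,\tau]$ with drift $\tilde b$ and source $\tilde\varphi$. The standing hypothesis $\|b\|_{C_b^{\beta}([0,T])}\,C_0\,\tau^{1/2}\le 1/4$, combined with $\|\tilde b\|_{C_b^{\beta}([0,\tau])}\le\|b\|_{C_b^{\beta}([0,T])}$, supplies exactly the smallness condition required by Lemma \ref{const_wiener} for the shifted problem. Hence there exists a solution $\tilde u\in C([0,\tau];C_b^{2,\beta'}(\real^d;\real^d))\cap C^1([0,\tau];C_b^{\beta'}(\real^d;\real^d))$ of
$$\frac{\partial \tilde u}{\partial t}+\nabla \tilde u\cdot \tilde b+\tfrac12\Delta \tilde u=-\tilde\varphi\text{ on }[0,\tau]\times\real^d,\quad \tilde u(\tau,x)=0,$$
satisfying $\|D^2\tilde u\|_{C_b^{\beta'}([0,\tau])}\le M\|\tilde\varphi\|_{C_b^{\beta}([0,\tau])}$ and $\|\nabla \tilde u\|_{C_b^{\beta}([0,\tau])}\le C_0\tau^{1/2}\|\tilde\varphi\|_{C_b^{\beta}([0,\tau])}$.

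Next I would undo the shift by setting $u(t,x):=\tilde u(t-t_1,x)$ for $t\in[t_1,t_2]$. Because $\partial_t$, $\nabla$ and $\Delta$ commute with the time translation and $b(t,\cdot)=\tilde b(t-t_1,\cdot)$, $\varphi(t,\cdot)=\tilde\varphi(t-t_1,\cdot)$, the substitution $s=t-t_1$ shows at once that $u$ solves the backward Kolmogorov equation stated in the corollary on $[t_1,t_2]$ with terminal condition $u(t_2,x)=\tilde u(\tau,x)=0$, and that $u$ lies in the same regularity class. Finally, since the translation preserves every norm involved, $\|D^2u\|_{C_b^{\beta'}([t_1,t_2])}=\|D^2\tilde u\|_{C_b^{\beta'}([0,\tau])}$ and $\|\nabla u\|_{C_b^{\beta}([t_1,t_2])}=\|\nabla \tilde u\|_{C_b^{\beta}([0,\tau])}$, which transfers the two displayed bounds to the interval $[t_1,t_2]$ with $\tau=t_2-t_1$.

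I do not expect a genuine obstacle here: the entire argument rests on the time-homogeneity of the heat part of the generator, so the only point that needs care is the invariance of the H\"older norms (and of the smallness condition) under the time shift, which is routine. I note in passing that the factor $(t_1-t_2)^{1/2}$ appearing in the statement of the gradient bound should read $(t_2-t_1)^{1/2}$, as produced by this reduction.
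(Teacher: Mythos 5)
Your proof is correct, and it takes a cleaner formal route than the paper. The paper gives no written proof of Corollary \ref{cor:1}: it simply asserts that ``similar arguments as in the proof of Theorem \ref{thm:1} and Lemma \ref{const_wiener}'' yield the result, i.e.\ the authors intend one to re-run the whole construction (the heat-kernel representation $v(t,x)=\int \e[\varphi(s,x+W_{t-s})]\,\diffns s$, the gradient bounds, the Picard-type iteration $u^{(n+1)}$, and the compactness step) with $[t_1,t_2]$ in place of $[0,t_1]$. You instead exploit the fact that the operator $\tfrac12\Delta$ is autonomous: you conjugate the problem by the time shift $s\mapsto s+t_1$, apply Theorem \ref{thm:1} and Lemma \ref{const_wiener} verbatim on $[0,t_2-t_1]$ with the shifted data $(\tilde b,\tilde\varphi)$, and shift back. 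This buys brevity and avoids duplicating estimates; what it requires --- and you checked it --- is that the relevant H\"older norms and the smallness condition $\|b\|_{C_b^{\beta}([0,T])}C_0(t_2-t_1)^{1/2}\leq 1/4$ are invariant under the translation. The one hypothesis you use implicitly is that Theorem \ref{thm:1} and Lemma \ref{const_wiener} hold for an arbitrary drift in $C([0,\tau];C_b^{\beta}(\real^d;\real^d))$, not only for the specific $b$ of the SDE \eqref{SDE_1}; this is indeed how the underlying result of Flandoli is stated, and the constant $C_0$ of Lemma \ref{const_wiener} is universal (it comes from the Gaussian kernel, independently of the drift and of $\varphi$), so your application with $\tilde b$ is legitimate. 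You are also right that $(t_1-t_2)^{1/2}$ in the statement is a typo for $(t_2-t_1)^{1/2}$: it is exactly what both your reduction and the paper's intended re-derivation produce.
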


The following result will also be needed in the proof of the main results.
\begin{Lem}\label{cor:2}
	Let $T>0$.
	For any $\varepsilon \in (0,1)$, there exist $m \in \n$ and $(T_j)_{j=0, \ldots,m}$ such that $0=T_0<T_j<T_{j+1}<\cdots<T_m=T$ and for any $j=0,\ldots,m-1$,
	\begin{align*}
	\|\varphi\|_{C_b^{\beta}([0,T])} C_0\cdot (T_{j+1}-T_{j})^{1/2} \leq \varepsilon \text{ and }
	\|b\|_{C_b^{\beta}([0,T])} C_0\cdot (T_{j+1}-T_{j})^{1/2} \leq \frac{1}{4}.
	\end{align*}
\end{Lem}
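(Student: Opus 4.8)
The plan is to observe that the two stated requirements are nothing more than an upper bound on the length of each subinterval $[T_j,T_{j+1}]$, so it suffices to take a sufficiently fine uniform partition of $[0,T]$. Since the maps $t\mapsto \varphi(t,\cdot)$ and $t\mapsto b(t,\cdot)$ belong to $C_b^{\beta}$-classes with finite norms, the admissible mesh size will be strictly positive, and this finiteness is the only place where the hypotheses enter.

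Concretely, I would first square the two inequalities. For a fixed $j$ they read $T_{j+1}-T_j \leq \varepsilon^2/(C_0^2\|\varphi\|_{C_b^{\beta}([0,T])}^2)$ and $T_{j+1}-T_j \leq 1/(16\,C_0^2\|b\|_{C_b^{\beta}([0,T])}^2)$. Hence I set
\begin{align*}
\delta := \min\left\{ \frac{\varepsilon^2}{C_0^2\|\varphi\|_{C_b^{\beta}([0,T])}^2},\ \frac{1}{16\,C_0^2\|b\|_{C_b^{\beta}([0,T])}^2} \right\},
\end{align*}
with the convention that a term is understood as $+\infty$ when the corresponding norm vanishes (in that degenerate case the associated inequality holds trivially for any partition). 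Because $\varepsilon\in(0,1)$ and both norms are finite, we have $\delta>0$.

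Next I would choose $m\in\n$ with $m\geq T/\delta$, for instance $m=\lceil T/\delta\rceil$ (taking $m=1$ if $\delta=+\infty$), and define the uniform partition $T_j:=jT/m$ for $j=0,\ldots,m$. Then $0=T_0<T_1<\cdots<T_m=T$ and every subinterval has length $T_{j+1}-T_j = T/m \leq \delta$. Taking square roots and multiplying by $C_0\|\varphi\|_{C_b^{\beta}([0,T])}$ (respectively by $C_0\|b\|_{C_b^{\beta}([0,T])}$) recovers both displayed estimates uniformly in $j$, which completes the argument.

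As for the main obstacle: there is really none of substance, since the statement is a purely elementary partitioning fact. The only point one must not overlook is that $\delta$ is genuinely positive, which is guaranteed precisely by the finiteness of $\|\varphi\|_{C_b^{\beta}([0,T])}$ and $\|b\|_{C_b^{\beta}([0,T])}$; the degenerate cases in which one of these norms is zero should be handled by the $+\infty$ convention above, so that the corresponding constraint becomes vacuous and imposes no restriction on the mesh.
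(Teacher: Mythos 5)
Your proof is correct and follows essentially the same route as the paper: both define the same threshold $\delta$ (yours written as a minimum of squares, the paper's as a squared minimum) and then take a partition of mesh at most $\delta$, the only cosmetic difference being that you use a uniform partition $T_j=jT/m$ while the paper uses intervals of length exactly $\delta$ with a possibly shorter final one. Your explicit handling of the degenerate case of a vanishing norm is a small extra care the paper omits, but it does not change the argument.
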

\begin{proof}
	For any $\varepsilon \in (0,1)$, we define
	\begin{align*}
	\delta
	:=\left( \frac{\varepsilon}{C_0\|\varphi\|_{C_b^{\beta}([0,T])}} \wedge \frac{1}{4C_0\|b\|_{C_b^{\beta}([0,T])}} \right)^{2}
	\end{align*}
	Since $\delta>0$, there exists $m \in \n$ such that $(m-1) \delta < T \leq m \delta$.
	Define $T_0:=0$, $T_m:=T$ and $T_j:=T_{j-1}+\delta = j \delta$ for $j=1,\ldots, m-1$.
	Then for any $j =0,\ldots, m$, $T_j-T_{j-1} \leq \delta$, we have
	\begin{align*}
	\|\varphi\|_{C_b^{\beta}([0,T])} C_0\cdot (T_{j+1}-T_{j})^{1/2} \leq \varepsilon \text{ and }
	\|b\|_{C_b^{\beta}([0,T])} C_0\cdot (T_{j+1}-T_{j})^{1/2} \leq \frac{1}{4}.
	\end{align*}
	This concludes the proof.
\end{proof}

\subsubsection{Case of truncated $\alpha$-stable process}

The following result is due to \cite[Theorem 17]{HaPr} and it provides existence and regularity of the Kolmogorov equation in a given space when the driven noise is a truncated symmetric $\alpha$-stable process.

\begin{Thm}\label{thm:2}
Assume that $L=(L_t)_{0 \leq t \leq T}$ is a $d$-dimensional truncated-$\alpha$-stable process for $\alpha \in (1,2)$ and $d \geq 2$.
Let $t_1 \in [0,T]$.
Suppose that $\varphi \in C([0,t_1];C_b^{\beta}(\real^d;\real^d))$ for $\beta \in (0,1)$ with $\alpha+\beta >2$.
Then, there exists a $u \in C([0,t_1],C_b^2(\real^d;\real^d)) \cap C^1([0,t_1],C_b(\real^d;\real^d))$ satisfying the backward Kolmogorov equation
\begin{align}\label{Kolmo_stable}
\frac{\partial u}{\partial t}  + \nabla u \cdot b + \mathcal{L} u=-\varphi \text{ on } [0,t_1] \times \real^d,~u(t_1,x)=0,
\end{align}
with
\begin{align*}
\|\nabla u\|_{C_b^{\beta}([0,t_1])} \leq C_{\alpha}(t_1)\|\varphi\|_{C_b^{\beta}([0,t_1])} \text{ with } \lim_{t_1 \to 0} C_{\alpha}(t_1)=0,
\end{align*}
and
\begin{align*}
\|D^2 u\|_{\infty} \leq M \|\varphi\|_{C_b^{\beta}([0,t_1])},
\end{align*}
for some positive constant $M$.
\end{Thm}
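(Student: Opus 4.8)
The plan is to solve the terminal-value problem \eqref{Kolmo_stable} in its mild (Duhamel) formulation, in exact parallel with the Wiener treatment of Theorem \ref{thm:1} and Lemma \ref{const_wiener}, with the Gaussian semigroup replaced by the convolution semigroup generated by $\mathcal{L}$. Let $(P_r)_{r \geq 0}$ denote this semigroup, with transition density $p_r$, so that $P_r g = p_r * g$ and $\partial_r P_r = \mathcal{L} P_r$. Setting $f := \varphi + \nabla u \cdot b$, a direct differentiation shows that \eqref{Kolmo_stable} with $u(t_1,\cdot)=0$ is equivalent to the integral equation
\[
u(t,x) = \int_t^{t_1} \bigl( P_{s-t}\,[\varphi(s,\cdot) + \nabla u(s,\cdot)\cdot b(s,\cdot)] \bigr)(x)\,\diffns s,
\]
and I would seek $u$ in the space of functions with $\nabla u \in C([0,t_1];C_b^{\beta}(\real^d;\real^d))$, on which the Picard map $u \mapsto \int_\bullet^{t_1} P_{s-\bullet}(\varphi(s) + \nabla u(s)\cdot b(s))\,\diffns s$ acts. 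This is the same iteration scheme $u^{(0)}=0$, $u^{(n+1)}=\text{(map applied to }u^{(n)})$ used in the proof of Lemma \ref{const_wiener}.

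The heart of the argument is a pair of smoothing (Schauder-type) estimates for $P_r$ in the Hölder scale: for every $g \in C_b^{\beta}(\real^d;\real^d)$ and small $r>0$,
\[
\|\nabla P_r g\|_{C_b^{\beta}} \leq C\, r^{-1/\alpha}\,\|g\|_{C_b^{\beta}},
\qquad
\|D^2 P_r g\|_{\infty} \leq C\, r^{(\beta-2)/\alpha}\,\|g\|_{C_b^{\beta}}.
\]
Inserting the first bound into the Duhamel formula and integrating in time gives $\|\nabla u\|_{C_b^{\beta}([0,t_1])} \leq C \bigl(\int_0^{t_1} r^{-1/\alpha}\,\diffns r\bigr)\|f\|_{C_b^{\beta}([0,t_1])} = C'\,t_1^{1-1/\alpha}\|f\|_{C_b^{\beta}([0,t_1])}$; since $\alpha>1$ the exponent $1-1/\alpha$ is positive, so $C_\alpha(t_1) = C'\,t_1^{1-1/\alpha} \to 0$ as $t_1 \to 0$, which is the claimed gradient estimate. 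The second bound yields $\|D^2 u\|_{\infty} \leq C \bigl(\int_0^{t_1} r^{(\beta-2)/\alpha}\,\diffns r\bigr)\|f\|_{C_b^{\beta}([0,t_1])}$, and this integral converges precisely when $(\beta-2)/\alpha > -1$, i.e. when $\alpha+\beta>2$, which is exactly the standing hypothesis; this is where the constraint $\alpha+\beta>2$ enters and produces the constant $M$.

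To close the scheme I would use $\|\nabla u \cdot b\|_{C_b^{\beta}} \leq 2\|b\|_{C_b^{\beta}}\|\nabla u\|_{C_b^{\beta}}$, so that on a short interval with $2\|b\|_{C_b^{\beta}([0,T])}\,C_\alpha(t_1)<1$ the Picard map is a contraction; the geometric series in the successive approximations $u^{(n)}$ then yields the stated bound on $\|\nabla u\|_{C_b^{\beta}([0,t_1])}$, a subsequence of $(u^{(n)})$ converges to a solution of class $C([0,t_1];C_b^{2}) \cap C^1([0,t_1];C_b)$, and the interval is extended to all of $[0,t_1]$ by patching sub-intervals as in Corollary \ref{cor:1}. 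I expect the genuine obstacle to lie entirely in the two kernel estimates above: unlike the pure $\alpha$-stable case, the truncated kernel $p_r$ is \emph{not} self-similar, so the scaling bounds $\int_{\real^d}|\nabla p_r|\,\diffns x \lesssim r^{-1/\alpha}$ and $\int_{\real^d}|D^2 p_r|\,\diffns x \lesssim r^{-2/\alpha}$ hold only in the small-time regime and must be extracted from a careful analysis of the truncated Lévy measure. These are precisely the estimates supplied by \cite[Theorem 17]{HaPr}; once they are in hand, all subsequent steps are the same Duhamel-and-fixed-point bookkeeping as in the Wiener case.
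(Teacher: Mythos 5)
Your proposal is correct and takes essentially the same route as the paper: the paper imports Theorem \ref{thm:2} wholesale from \cite[Theorem 17]{HaPr}, and its Lemma \ref{const_stable} runs precisely your Duhamel-plus-Picard scheme --- the representation $v(t,x)=\int_0^t \e[\varphi(s,x+L_{t-s})]\,\diffns s$, the kernel gradient bound of order $(t-s)^{-1/\alpha}$ taken from \cite{HaPr}, the resulting $C_{\alpha}(t_1)=C_0 t_1^{1-1/\alpha}$, and the smallness condition $\|b\|_{C_b^{\beta}([0,T])}C_{\alpha}(t_1)\leq 1/4$ to close the iteration. Your further observation that $\alpha+\beta>2$ is exactly what makes $\int_0^{t_1} r^{(\beta-2)/\alpha}\,\diffns r$ finite, and hence yields the $\|D^2u\|_{\infty}$ bound, correctly identifies where that hypothesis enters (a point the paper leaves inside the citation to \cite{HaPr}).
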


The function $C_{\alpha}(t_1)$ is made more precise in the following lemma:

\begin{Lem}\label{const_stable}
	The function $C_{\alpha}(t_1)$ defined in Theorem 2.8 is given by $C_{\alpha}(t_1)=C_0 t_1^{1-1/\alpha}$ for some constant $C_0$ and $t_1 \in (0,T]$ with $\|b\|_{C_b^{\beta}([0,T])} C_0 t_1^{1-1/\alpha} \leq 1/4$.
\end{Lem}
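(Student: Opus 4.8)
The plan is to adapt, almost verbatim, the proof of Lemma~\ref{const_wiener} to the truncated $\alpha$-stable setting, replacing the heat semigroup by the transition semigroup $(P_t)_{t \geq 0}$ of the truncated symmetric $\alpha$-stable process $L$ and tracking the change in the time exponent. First I would isolate the driftless equation
\begin{align*}
\frac{\partial v}{\partial t} = \mathcal{L} v + \varphi \text{ on } [0,t_1]\times\real^d,\quad v(0,x)=0,
\end{align*}
whose solution admits the representation $v(t,x) = \int_0^t P_{t-s}\varphi(s,\cdot)(x)\diffns s = \int_0^t \e[\varphi(s, x+L_{t-s})]\diffns s$, exactly as $v(t,x)=\int_0^t \e[\varphi(s,x+W_{t-s})]\diffns s$ played that role in the Wiener case. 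The goal of this first step is the bound
\begin{align*}
\|\nabla v\|_{C_b^{\beta}([0,t_1])} \leq C_0\, t_1^{1-1/\alpha}\,\|\varphi\|_{C_b^{\beta}([0,t_1])},
\end{align*}
which identifies $C_\alpha(t_1) = C_0 t_1^{1-1/\alpha}$.

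The key analytic input, replacing the explicit Gaussian computation and the boundedness of $|x|e^{-|x|^2}$ used for $L=W$, is a gradient estimate for the stable semigroup. Writing $P_t\varphi = p_t * \varphi$ with $p_t$ the transition density, I would use $\nabla P_t\varphi = (\nabla p_t)*\varphi$ together with the small-time bound $\|\nabla p_t\|_{L^1(\real^d)} \leq C\, t^{-1/\alpha}$ valid uniformly for $t \in (0,T]$. This gives both $\|\nabla P_t \varphi\|_\infty \leq C t^{-1/\alpha}\|\varphi\|_\infty$ and, since $\nabla P_t\varphi(x)-\nabla P_t\varphi(y)=\int (\nabla p_t)(z)\big(\varphi(x-z)-\varphi(y-z)\big)\diffns z$, the matching H\"older estimate with constant $C t^{-1/\alpha}$. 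Integrating in time then yields $\int_0^t (t-s)^{-1/\alpha}\diffns s = \tfrac{t^{1-1/\alpha}}{1-1/\alpha}$, which is finite precisely because $\alpha \in (1,2)$ forces $1/\alpha \in (1/2,1)$, and hence produces the claimed exponent $1-1/\alpha$.

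With this estimate in hand, the remainder is identical in form to Lemma~\ref{const_wiener}: set $u^{(0)}=0$ and let $u^{(n+1)}$ solve $\tfrac{\partial u^{(n+1)}}{\partial t} + \mathcal{L}u^{(n+1)} = -(b\cdot\nabla)u^{(n)}-\varphi$ on $[0,t_1]\times\real^d$ with $u^{(n+1)}(t_1,\cdot)=0$; applying the driftless estimate to $v^{(n)} := (b\cdot\nabla)u^{(n)}+\varphi$ gives $\|\nabla u^{(n+1)}\|_{C_b^\beta([0,t_1])} \leq C_0 t_1^{1-1/\alpha}\|\nabla v^{(n)}\|_{C_b^\beta([0,t_1])}$. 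Choosing $t_1$ so that $\|b\|_{C_b^\beta([0,T])}C_0 t_1^{1-1/\alpha}\leq 1/4$ turns the resulting recursion into a contraction and produces a geometric series bounded by $2\|\varphi\|_{C_b^\beta([0,t_1])}$, whence $\|\nabla u^{(n+1)}\|_{C_b^\beta([0,t_1])}\leq 2C_0 t_1^{1-1/\alpha}\|\varphi\|_{C_b^\beta([0,t_1])}$. Passing to the limit along the convergent subsequence furnished by Theorem~\ref{thm:2} (that is, \cite[Theorem 17]{HaPr}) transfers the bound to the solution $u$ of \eqref{Kolmo_stable} and fixes $C_\alpha(t_1)=C_0 t_1^{1-1/\alpha}$.

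I expect the main obstacle to be justifying the gradient bound $\|\nabla p_t\|_{L^1}\leq C t^{-1/\alpha}$: unlike the Gaussian case there is no closed-form density for the truncated $\alpha$-stable process, and because its L\'evy measure is truncated the exact self-similar scaling $p_t(x)=t^{-d/\alpha}p_1(t^{-1/\alpha}x)$ of the genuine $\alpha$-stable process is lost. The resolution is that truncation only alters the large jumps, so for $t$ in a bounded interval the short-time behaviour, and in particular the gradient estimate, coincides up to constants with that of the untruncated process; this can be extracted from the heat-kernel estimates for truncated stable processes in the cited literature (e.g.\ \cite{KimSong, HaPr}). Note that the condition $\alpha+\beta>2$ is not needed for this computation of $C_\alpha(t_1)$, but only for the underlying regularity theory of Theorem~\ref{thm:2}.
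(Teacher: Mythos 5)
Your proposal is correct and follows essentially the same route as the paper: driftless equation first with the representation $v(t,x)=\int_0^t \e[\varphi(s,x+L_{t-s})]\diffns s$, a gradient bound of order $(t-s)^{-1/\alpha}$ integrated to give the exponent $1-1/\alpha$, then the Picard-type iteration under $\|b\|_{C_b^{\beta}([0,T])}C_0 t_1^{1-1/\alpha}\leq 1/4$ and passage to the limit via \cite[Theorem 17]{HaPr}. Even your ``main obstacle'' is resolved exactly as in the paper: your bound $\|\nabla p_t\|_{L^1}\leq C t^{-1/\alpha}$ is, after the change of variables $y=t^{1/\alpha}u$, precisely the estimate $\int_{\real^d}|\frac{\partial}{\partial x_i}(p_t(t^{1/\alpha}u))|\diffns u\leq C_0' t^{-d/\alpha}$ that the paper quotes from \cite{HaPr} (p.~5338, together with their formula (19)), so no new heat-kernel analysis is actually needed.
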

\begin{proof}
	Let us first prove that the solution $v$ to the equation
	\begin{align}\label{stable_eq_1}
	\frac{\partial v}{\partial t}  = \mathcal{L} v+\varphi \text{ on } [0,t_1] \times \real^d,~v(0,x)=0,
	\end{align}
	for $\varphi \in C([0,t_1];C_b^{\beta}(\real^d;\real^d))$ satisfies
	\begin{align}\label{stable_eq_2}
	\|\nabla v\|_{C_b^{\beta}([0,t_1])}
	\leq C_{\alpha}(t_1) \|\varphi\|_{C_b^{\beta}([0,t_1])}.
	\end{align}
It follows from \cite[Theorem 16]{HaPr} that
	\begin{align*}
	v(t,x)=\int_{0}^{t} \e[\varphi(s,x+L_{t-s})]\diffns s, t \in [0,t_1]
	\end{align*}
	is a solution to the equation \eqref{stable_eq_1} and (see (19) of \cite{HaPr})
	\begin{align*}
	\frac{\partial}{\partial x_i} \e[\varphi(s,x+L_{t})]
	=-\int_{\real^d}\varphi(s,t^{1/\alpha}u+x) t^{\frac{d-1}{\alpha}} \frac{\partial}{\partial x_i}(p_t(t^{1/\alpha}u))\diffns u.
	\end{align*}
	Here $p_t(x)$ is the density function of $L_t$ which satisfies
	\begin{align*}
	\int_{\real^d} \left| \frac{\partial}{\partial x_i}(p_t(t^{1/\alpha}u)) \right| \diffns u
	\leq C_0' t^{-d/\alpha}
	\end{align*}
	for some constant $C'_0$, (see, \cite{HaPr}, page 5338).
	Hence there exists a constant $\widetilde{C}_0>0$ such that
	\begin{align}\label{stable_eq_31}
	|\nabla v(t,x)|
	\leq \widetilde{C}_0 \sup_{s \in [0,t_1],x \in \real^d}|\varphi(s,x)| \int_{0}^{t} \frac{1}{(t-s)^{1/\alpha}}\diffns s
	\leq C_0 t_1^{1-1/\alpha} \|\varphi\|_{C_b^{\beta}([0,t_1])},
	\end{align}
	where $C_0:=\alpha/(\alpha-1) \widetilde{C}_0$. Similarly
	\begin{align}\label{stable_eq_41}
	\frac{|\nabla v(t,x)-\nabla v(t,y)|}{|x-y|^{\beta}}
	\leq \widetilde{C}_0 \sup_{s \in [0,t_1],x\neq y}\frac{|\varphi(s,x)-\varphi(s,y)|}{|x-y|^{\beta}} \int_{0}^{t} \frac{1}{(t-s)^{1/\alpha}}\diffns s
	\leq C_0 t_1^{1-1/\alpha} \|\varphi\|_{C_b^{\beta}([0,t_1])}.
	\end{align}
	Combining \eqref{stable_eq_31} and \eqref{stable_eq_41}, we conclude that \eqref{stable_eq_2} holds.

	Next, set $u^{(0)}=0$ and for $n \in \n \cup \{0\}$, $u^{(n+1)}$ be a solution to
	\begin{align*}
	\frac{\partial u^{(n+1)}}{\partial t}  + \mathcal{L} u^{(n+1)}=-(b\cdot \nabla)u^{(n)} - \varphi \text{ on } [0,t_1] \times \real^d,~u^{(n+1)}(t_1,x)=0.
	\end{align*}
	Arguing as in Lemma \ref{const_wiener}, we get by
	choosing $t_1$ with $\|b\|_{C_b^{\beta}([0,T])} C_{\alpha}(t_1) \leq 1/4$
	\begin{align*}
	\|\nabla u^{(n+1)}\|_{C_b^{\beta}([0,t_1])} \leq 2C_{\alpha}(t_1)\|\varphi\|_{C_b^{\beta}([0,t_1])}.
	\end{align*}
	Furthermore, we know from \cite[Theorem 17]{HaPr} that the sequence $(u^{(n)})_{n \in \n}$ which converges uniformly in $(t,x) \in [0,t_1] \times \real^d $ to some $u$ such that $u$ is a solution to the backward Kolmogorov equation \eqref{Kolmo_stable} and $\nabla u^{(n)}$ converges $\nabla u$ in $C_b^{\beta}([0,t_1])$.
	Hence the result follows.
\end{proof}


The following result can be derived using similar arguments as in the proof of Theorem \ref{thm:2} and Lemma \ref{const_stable}.

\begin{Cor}\label{cor:stable:1}
	Let $0 \leq t_1 < t_2 \leq T$ with $\|b\|_{C_b^{\beta}([0,T])} C_0\cdot (t_2-t_1)^{1-1/\alpha} \leq 1/4$.
	For all $\varphi \in C([t_1,t_2];C_b^{\beta}(\real^d;\real^d))$ for $\beta \in (0,1)$ with $\alpha+\beta >2$, there exists a $u \in C([t_1,t_2],C_b^2(\real^d;\real^d)) \cap C^1([t_1,t_2],C_b(\real^d;\real^d))$ satisfying the backward Kolmogorov equation
	\begin{align*}
	\frac{\partial u}{\partial t}  + \nabla u \cdot b + \mathcal{L} u=-\varphi \text{ on } [t_1,t_2] \times \real^d,~u(t_2,x)=0,
	\end{align*}
	with
	\begin{align*}
	\|\nabla u\|_{C_b^{\beta}([t_1,t_2])} \leq C_0 (t_2-t_1)^{1-1/\alpha}\|\varphi\|_{C_b^{\beta}([t_1,t_2])}
	\end{align*}
and
	\begin{align*}
	\|D^2 u\|_{\infty} \leq M \|\varphi\|_{C_b^{\beta}([t_1,t_2])},
	\end{align*}
	for some constant $M$.
\end{Cor}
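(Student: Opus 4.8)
The plan is to reduce the statement on the interval $[t_1,t_2]$ to the already-established results on an interval of the form $[0,\tau]$ by exploiting the time-homogeneity of the driving noise. The crucial observation is that the infinitesimal generator $\mathcal{L}$ of the truncated symmetric $\alpha$-stable process acts only in the space variable and carries no explicit time dependence; equivalently, the transition density $p_t$ depends on the elapsed time alone. Consequently every estimate produced in Theorem \ref{thm:2} and Lemma \ref{const_stable} is sensitive to the time interval only through its length, which is precisely what allows the constant $C_0$ (and $M$) to be transported from $[0,t_1]$ to $[t_1,t_2]$.

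Concretely, I would first perform a time shift. Given $\varphi \in C([t_1,t_2];C_b^\beta(\real^d;\real^d))$, set $\widetilde\varphi(\tau,x):=\varphi(t_1+\tau,x)$ and $\widetilde b(\tau,x):=b(t_1+\tau,x)$ for $\tau\in[0,t_2-t_1]$. Because the $C_b^\beta([a,b])$ norm defined in Section \ref{sec:results} is the supremum over time of a purely spatial quantity, it is invariant under such a shift; in particular $\|\widetilde\varphi\|_{C_b^\beta([0,t_2-t_1])}=\|\varphi\|_{C_b^\beta([t_1,t_2])}$ and $\|\widetilde b\|_{C_b^\beta([0,t_2-t_1])}\le\|b\|_{C_b^\beta([0,T])}$. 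The smallness hypothesis $\|b\|_{C_b^\beta([0,T])}\,C_0(t_2-t_1)^{1-1/\alpha}\le 1/4$ is exactly the condition under which the Picard contraction in Lemma \ref{const_stable} closes, since there the drift is always controlled by its full-interval norm $\|b\|_{C_b^\beta([0,T])}$; hence Lemma \ref{const_stable}, with $t_1$ replaced by $t_2-t_1$ and data $\widetilde\varphi,\widetilde b$, applies verbatim.

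Applying Theorem \ref{thm:2} and Lemma \ref{const_stable} on $[0,t_2-t_1]$ then yields a solution $\widetilde u\in C([0,t_2-t_1],C_b^2(\real^d;\real^d))\cap C^1([0,t_2-t_1],C_b(\real^d;\real^d))$ of the backward Kolmogorov equation with terminal datum $\widetilde u(t_2-t_1,\cdot)=0$, satisfying $\|\nabla\widetilde u\|_{C_b^\beta([0,t_2-t_1])}\le C_0(t_2-t_1)^{1-1/\alpha}\|\widetilde\varphi\|_{C_b^\beta([0,t_2-t_1])}$ and $\|D^2\widetilde u\|_\infty\le M\|\widetilde\varphi\|_{C_b^\beta([0,t_2-t_1])}$. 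I would then define $u(t,x):=\widetilde u(t-t_1,x)$ for $t\in[t_1,t_2]$. Since $\partial_t u(t,x)=\partial_\tau\widetilde u(t-t_1,x)$ while $\mathcal{L}$ and $\nabla$ commute with the time shift, $u$ solves $\partial_t u+\nabla u\cdot b+\mathcal{L}u=-\varphi$ on $[t_1,t_2]\times\real^d$ with $u(t_2,\cdot)=0$, and transporting the norms back (again using shift-invariance of the $C_b^\beta$ norm) gives exactly the two claimed estimates.

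The argument contains no genuine analytic difficulty beyond those already resolved in Theorem \ref{thm:2} and Lemma \ref{const_stable}; the only point requiring care is the bookkeeping that makes the time-translation reduction legitimate, namely recording that the generator and the transition density are time-homogeneous and that the relevant function-space norms ignore the absolute location of the time interval. Once these invariances are noted, every inequality is inherited from the $t_1=0$ case with $t_1$ replaced by $t_2-t_1$. An alternative, slightly more laborious route would bypass the shift and rerun the scheme of Lemma \ref{const_stable} directly on $[t_1,t_2]$, starting from the representation $v(t,x)=\int_{t_1}^t\e[\varphi(s,x+L_{t-s})]\diffns s$ and the density estimate $\int_{\real^d}|\partial_{x_i}(p_t(t^{1/\alpha}u))|\diffns u\le C_0'\,t^{-d/\alpha}$; this produces identical bounds but merely duplicates the computation, so the translation reduction is preferable.
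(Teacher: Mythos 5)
Your proposal is correct and matches the paper's intent: the paper offers no detailed argument here, stating only that the corollary ``can be derived using similar arguments as in the proof of Theorem \ref{thm:2} and Lemma \ref{const_stable}'', and your time-translation reduction (shift $[t_1,t_2]$ to $[0,t_2-t_1]$, use time-homogeneity of $\mathcal{L}$ and shift-invariance of the $C_b^{\beta}$ norms, then transport the solution and the bounds back) is precisely a clean, rigorous implementation of that remark. The only caveat worth recording is that the smallness hypothesis must be checked against the full-interval norm $\|b\|_{C_b^{\beta}([0,T])}$, exactly as you do via $\|\widetilde b\|_{C_b^{\beta}([0,t_2-t_1])}\leq\|b\|_{C_b^{\beta}([0,T])}$, so no gap remains.
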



We also have

\begin{Lem}\label{cor:stable:2}
	Let $T >0$.
	For any $\varepsilon \in (0,1)$, there exist $m \in \n$ and $(T_j)_{j=0, \ldots,m}$ such that $0=T_0<T_j<T_{j+1}<\cdots<T_m=T$ and for any $j=0,\ldots,m-1$,
	\begin{align*}
	\|\varphi\|_{C_b^{\beta}([0,T])} C_0 \cdot (T_{j+1}-T_{j})^{1-1/\alpha} \leq \varepsilon
	\text{ and }
	\|b\|_{C_b^{\beta}([0,T])} C_0 \cdot (T_{j+1}-T_{j})^{1-1/\alpha} \leq \frac{1}{4}.
	\end{align*}	
\end{Lem}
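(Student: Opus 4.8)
The plan is to follow the proof of Lemma~\ref{cor:2} essentially verbatim, the only change being that the exponent $1/2$ coming from the Wiener heat kernel is replaced by the exponent $1-1/\alpha$ produced by the truncated $\alpha$-stable heat kernel (cf.\ Corollary~\ref{cor:stable:1}). The statement is a purely elementary fact about partitioning $[0,T]$ into subintervals of sufficiently small mesh, so no probabilistic input is needed beyond having the constant $C_0$ at hand.

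First I would introduce the two target quantities
\begin{align*}
a := \frac{\varepsilon}{C_0 \|\varphi\|_{C_b^{\beta}([0,T])}}, \qquad c := \frac{1}{4 C_0 \|b\|_{C_b^{\beta}([0,T])}},
\end{align*}
and define the mesh size
\begin{align*}
\delta := (a \wedge c)^{\alpha/(\alpha-1)}.
\end{align*}
The exponent $\alpha/(\alpha-1)$ is exactly the reciprocal of $1-1/\alpha$, chosen so that $\delta^{1-1/\alpha} = a \wedge c$. Since $\alpha \in (1,2)$ we have $1-1/\alpha \in (0,1/2)$, hence $\alpha/(\alpha-1) > 0$ and $\delta > 0$.

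Next I would pick $m \in \n$ with $(m-1)\delta < T \leq m\delta$, and set $T_0 := 0$, $T_m := T$ and $T_j := j\delta$ for $j = 1,\ldots,m-1$. This makes the partition strictly increasing, and every subinterval satisfies $T_{j+1} - T_j \leq \delta$ (for the last one, $T_m - T_{m-1} = T - (m-1)\delta \leq \delta$). Because $1-1/\alpha > 0$ the map $x \mapsto x^{1-1/\alpha}$ is nondecreasing on $[0,\infty)$, so $(T_{j+1}-T_j)^{1-1/\alpha} \leq \delta^{1-1/\alpha} = a \wedge c$ for each $j$. Multiplying by $C_0 \|\varphi\|_{C_b^{\beta}([0,T])}$ and $C_0 \|b\|_{C_b^{\beta}([0,T])}$ respectively then gives the two claimed inequalities.

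The only point needing care — and the sole difference from Lemma~\ref{cor:2} — is the exponent defining $\delta$: one must invert $1-1/\alpha$ instead of $1/2$, and verify this exponent is positive so that both $\delta > 0$ and the monotonicity of $x \mapsto x^{1-1/\alpha}$ are available. I expect no genuine obstacle here; the lemma is a bookkeeping device that will later allow us to chain the local estimates of Corollary~\ref{cor:stable:1} across a finite partition of $[0,T]$.
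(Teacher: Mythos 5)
Your proof is correct and is exactly the argument the paper intends: the paper states this lemma without proof precisely because it is the proof of Lemma~\ref{cor:2} with the mesh $\delta$ defined using the exponent $\alpha/(\alpha-1)$ (the reciprocal of $1-1/\alpha$) in place of $2$, which is what you do. Your construction of the partition and the verification $(T_{j+1}-T_j)^{1-1/\alpha} \leq \delta^{1-1/\alpha}$ match the paper's argument step for step.
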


\subsection{Main theorems}

In this section, we state the main theorems of this paper.
We obtain results on the rates of the Euler-Maruyama approximation in $L^p$-sup norm for $p \geq 1$ if $L$ is a Wiener process or a truncated symmetric $\alpha$-stable process.

\begin{Thm}\label{main_1.5}
Let $L=W$ be a $d$-dimensional Wiener process.
Assume that the drift coefficient $b$ is bounded $\beta$-H\"older continuous with $\beta \in (0,1)$ in space and $\eta$-H\"older continuous in time with $\eta \in [1/2,1]$, i.e., there exists $K>0$ such that for any $x,y \in \real^d$ and $t,s \in [0,T]$,
\begin{align*}
|b(t,x)-b(t,y)| \leq K|x-y|^{\beta} \text{ and }
|b(t,x)-b(s,x)| \leq K|t-s|^{\eta}.
\end{align*}
Then for any $p\geq 1$, there exists a positive constant $C$ depending on $K,M,T,d,p,x_0, \beta, \eta$ and $\|b\|_{C_b^{\beta}([0,T])}$ such that
\begin{align*}
\e\left[\sup_{0 \leq t \leq T}\left|X_t-X_t^{(n)}\right|^p \right] \leq \frac{C}{n^{p\beta/2}}.
\end{align*}
\end{Thm}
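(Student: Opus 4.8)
The plan is to exploit the fact that the driving noise $L=W$ cancels in the difference, so that
\begin{align*}
X_t-X_t^{(n)}=\int_0^t\left\{b(s,X_s)-b\left(\eta_n(s),X_{\eta_n(s)}^{(n)}\right)\right\}\diffns s
\end{align*}
is a process of bounded variation. A direct Gronwall argument fails because $b$ is only $\beta$-H\"older in space, so the natural bound on the integrand is a constant times $|X_s-X_s^{(n)}|^{\beta}$, which does not close. To overcome this I would linearise the H\"older drift by means of the backward Kolmogorov equation, taking the source term $\varphi=b$ in Theorem \ref{thm:1}: let $u$ denote the corresponding $\real^d$-valued solution of \eqref{Kolmo_wiener}, so that $\partial_t u+\nabla u\cdot b+\tfrac12\Delta u=-b$ with $u(t_1,\cdot)=0$.

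Next I would apply It\^o's formula to $u(t,X_t)$ and to $u(t,X_t^{(n)})$, using the PDE to eliminate the time and second-order terms. For $X$ this yields $\int_{0}^{t}b(s,X_s)\diffns s=u(0,x_0)-u(t,X_t)+\int_0^t\nabla u(s,X_s)\diffns W_s$, while for $X^{(n)}$, whose drift is frozen at $\eta_n(s)$, an extra term $\nabla u(s,X_s^{(n)})\cdot\{b(\eta_n(s),X_{\eta_n(s)}^{(n)})-b(s,X_s^{(n)})\}$ appears. Subtracting the two representations and recombining with the residual discretisation error $b(s,X_s^{(n)})-b(\eta_n(s),X_{\eta_n(s)}^{(n)})$, I expect a decomposition of the form
\begin{align*}
X_t-X_t^{(n)}=-\left\{u(t,X_t)-u(t,X_t^{(n)})\right\}+\int_0^t\left\{\nabla u(s,X_s)-\nabla u(s,X_s^{(n)})\right\}\diffns W_s+R_t^{(n)},
\end{align*}
where $R_t^{(n)}$ collects the two terms driven by $b(s,X_s^{(n)})-b(\eta_n(s),X_{\eta_n(s)}^{(n)})$, multiplied by a factor bounded by $1+\|\nabla u\|_{\infty}$.

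The three contributions are then estimated as follows. The first is controlled by $\|\nabla u\|_{\infty}\sup_{0\le s\le t}|X_s-X_s^{(n)}|$; by Lemma \ref{const_wiener} (with $\varphi=b$) one can make $\|\nabla u\|_{\infty}\le\|\nabla u\|_{C_b^{\beta}}$ as small as desired by shrinking the time interval, so after raising to the $p$-th power and taking expectations this term can be absorbed into the left-hand side. Since $u\in C([0,t_1];C_b^{2,\beta'})$ has bounded $D^2u$, the map $\nabla u(s,\cdot)$ is Lipschitz with constant $\le M\|b\|_{C_b^{\beta}}$; hence by the Burkholder-Davis-Gundy inequality the stochastic integral is bounded by a constant times $\int_0^t\e[\sup_{0\le r\le s}|X_r-X_r^{(n)}|^p]\diffns s$, which feeds a Gronwall estimate. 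For $R_t^{(n)}$ I would use the time- and space-H\"older continuity of $b$ together with the one-step increment bound $|X_s^{(n)}-X_{\eta_n(s)}^{(n)}|\le\|b\|_{\infty}|s-\eta_n(s)|+|W_s-W_{\eta_n(s)}|$; taking expectations gives $\e[|X_s^{(n)}-X_{\eta_n(s)}^{(n)}|^{p\beta}]\lesssim n^{-p\beta/2}$, while $|s-\eta_n(s)|^{p\eta}\le n^{-p\eta}\le n^{-p\beta/2}$ since $\eta\ge1/2>\beta/2$, so $\e[\sup_{0\le t\le T}|R_t^{(n)}|^p]\lesssim n^{-p\beta/2}$, which is exactly the target rate.

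Finally, because $\|\nabla u\|_{\infty}$ can only be made small on short intervals, I would run the argument locally. Using Corollary \ref{cor:1} and Lemma \ref{cor:2} with $\varphi=b$, partition $[0,T]$ into finitely many subintervals $[T_j,T_{j+1}]$ on which $\|\nabla u\|_{\infty}$ is small enough (depending on $p$) to permit the absorption; on each subinterval the same computation, now starting from the endpoint value $X_{T_j}-X_{T_j}^{(n)}$ and producing boundary terms $u(T_j,\cdot)$, yields after the Gronwall step
\begin{align*}
\e\left[\sup_{T_j\le t\le T_{j+1}}|X_t-X_t^{(n)}|^p\right]\le C\left\{\e\left[|X_{T_j}-X_{T_j}^{(n)}|^p\right]+n^{-p\beta/2}\right\}.
\end{align*}
Iterating this over the finitely many subintervals, starting from $X_0-X_0^{(n)}=0$ and summing, gives the claim, the number of subintervals $m$ being independent of $n$. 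The main obstacle I anticipate is organising this localisation correctly: one must ensure that the smallness of $\|\nabla u\|_{\infty}$ required for the $L^p$-absorption (which worsens as $p$ grows) is compatible with a partition whose cardinality remains fixed as $n\to\infty$, and that the boundary terms $u(T_j,\cdot)$ generated at each splitting are reabsorbed into the initial error of the next interval rather than accumulating.
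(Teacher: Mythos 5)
Your proposal is correct and follows essentially the same route as the paper's own proof: linearising the H\"older drift through the backward Kolmogorov equation with source $\varphi=b$, applying It\^o's formula to both $X$ and $X^{(n)}$, absorbing the $u$-difference terms via the smallness of $\|\nabla u\|_{C_b^{\beta}}$ on short subintervals (Corollary \ref{cor:1} and Lemma \ref{cor:2}), controlling the stochastic integral through the boundedness of $D^2u$ plus Burkholder-Davis-Gundy and Gronwall, bounding the discretisation remainder by Lemma \ref{lem:stand}, and iterating over the finitely many subintervals with the boundary errors fed into the next interval's initial condition. The organisational concerns you flag at the end are resolved exactly as you anticipate, so no changes are needed.
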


\begin{Rem}
	Under conditions of Theorem \ref{main_1.5}, the pathwise uniqueness holds for the SDE \eqref{SDE_1} (see , \cite[Corollary 2.3]{Flandoli}).
	Then \cite[Theorem D]{KaNa} guaranties that the Euler-Maruyama approximation \eqref{EM_1} converges to the corresponding SDE in $L^2$-sup norm.
	Hence, Theorem \ref{main_1.5} generalizes \cite[Theorem D]{KaNa} in two directions: first, it gives an $L^p$-sup convergence for any $p \geq 1$ and second, it gives the rate of convergence which was not given previously.
\end{Rem}

\begin{Thm}\label{main_2}
Assume that $L$ is a $d$-dimensional truncated symmetric $\alpha$-stable process with $\alpha \in (1,2)$ and $d \geq 2$.
Suppose that SDE \eqref{SDE_1} has a unique strong solution.
Moreover, assume that the drift coefficient $b$ is bounded, $\beta$-H\"older continuous in space with $\beta \in (0,1)$ and $\alpha+\beta>2$, and $\eta$-H\"older continuous in time with $\eta \in [1/2,1]$.
Then, for any $p \geq 1$, there exists a positive constant $C$ depending on $K, M, T, d, p, x_0, \alpha, \beta, \eta$ and $\|b\|_{C_b^{\beta}([0,T])}$ such that
\begin{align*}
\e\left[\sup_{0 \leq t \leq T}\left|X_t-X_t^{(n)}\right|^p\right]
\leq \left\{ \begin{array}{ll}
\displaystyle \frac{C}{n} &\text{ if } p \beta \geq 2,  \\
\displaystyle \frac{C}{n^{p\beta/2}} &\text{ if } p \geq 2,~1 \leq 	p \beta < 2 \text{ or } p \in [1,2).
\end{array}\right.
\end{align*}
\end{Thm}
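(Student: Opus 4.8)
The plan is to carry the Kolmogorov-equation method of the Wiener case over to the jump setting, using Corollary \ref{cor:stable:1} and Lemma \ref{cor:stable:2} in place of their Wiener analogues. The basic observation is that the noise cancels in the difference, so that $X_t - X_t^{(n)} = \int_0^t \{ b(s,X_s) - b(\eta_n(s), X_{\eta_n(s)}^{(n)}) \}\diff s$ has finite variation; the obstruction is that $b$ is merely $\beta$-H\"older, so a direct Gronwall argument is unavailable. To regularise, I would first split $[0,T]$ by Lemma \ref{cor:stable:2} into finitely many subintervals $[T_j,T_{j+1}]$, $j=0,\dots,m-1$, and on each of them take $u$ to be the $\real^d$-valued solution of the backward Kolmogorov equation of Corollary \ref{cor:stable:1} with source $\varphi=b$, so that $\partial_t u + \nabla u\cdot b + \mathcal{L}u = -b$ with $\|\nabla u\|_\infty \le \|\nabla u\|_{C_b^\beta([T_j,T_{j+1}])} \le 1/4$ and $\|D^2u\|_\infty \le M\|b\|_{C_b^\beta([0,T])}$.

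The core step is to apply the It\^o formula for the jump process to $u(t,X_t)$ and to $u(t,X_t^{(n)})$ over $[T_j,t]$. Using the PDE, the first yields $\int_{T_j}^t b(s,X_s)\diff s = u(T_j,X_{T_j}) - u(t,X_t) + M_t$, where $M_t = \int_{T_j}^t\int_{|z|\le1}\{u(s,X_{s-}+z) - u(s,X_{s-})\}\widetilde{N}(\diff s,\diff z)$; the second yields the same identity for $X^{(n)}$ together with the extra term $\int_{T_j}^t \nabla u(s,X_s^{(n)})\{b(\eta_n(s),X_{\eta_n(s)}^{(n)}) - b(s,X_s^{(n)})\}\diff s$. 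Subtracting and re-expressing the Euler drift via $\int_{T_j}^t b(\eta_n(s),X_{\eta_n(s)}^{(n)})\diff s = \int_{T_j}^t b(s,X_s^{(n)})\diff s + \int_{T_j}^t\{b(\eta_n(s),X_{\eta_n(s)}^{(n)}) - b(s,X_s^{(n)})\}\diff s$, one writes $X_t - X_t^{(n)}$, for $t\in[T_j,T_{j+1}]$, as the sum of: the previous-interval error $(X_{T_j}-X_{T_j}^{(n)}) + \{u(T_j,X_{T_j}) - u(T_j,X_{T_j}^{(n)})\}$; the term $(\mathrm{i})$ $u(t,X_t^{(n)}) - u(t,X_t)$; the martingale difference $(\mathrm{ii})$ $M_t^{X} - M_t^{X^{(n)}}$; and $(\mathrm{iii})$ $-\int_{T_j}^t (I + \nabla u(s,X_s^{(n)}))\{b(\eta_n(s),X_{\eta_n(s)}^{(n)}) - b(s,X_s^{(n)})\}\diff s$.

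Each term is then estimated in $L^p$-sup norm for $p\ge2$. The previous-interval error and $(\mathrm{i})$ are controlled by $\|\nabla u\|_\infty\le1/4$: the former feeds back $\tfrac14\,\e[\sup_{t\le T_j}|X_t - X_t^{(n)}|^p]$, while $(\mathrm{i})$ gives a factor $\le 4^{-p}$ that is absorbed into the left-hand side. For $(\mathrm{ii})$, I would write $u(s,y+z)-u(s,y)=\int_0^1\nabla u(s,y+\theta z)\cdot z\,\diff\theta$ so that the integrand of the martingale difference is bounded by $\|D^2u\|_\infty|X_{s-}-X_{s-}^{(n)}|\,|z|$; Kunita's inequality then controls its $p$-th moment by the $p/2$- and $p$-moment terms, both finite since $\int_{|z|\le1}|z|^2\nu(\diff z)<\infty$ and $\int_{|z|\le1}|z|^p\nu(\diff z)<\infty$ for $p>\alpha$ (valid as $p\ge2>\alpha$), and both of Gronwall type $\int_{T_j}^t\e[\sup_{r\le s}|X_r-X_r^{(n)}|^p]\diff s$. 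For $(\mathrm{iii})$, boundedness of $I+\nabla u$ and the H\"older continuity of $b$ give the bound $\le C(|\eta_n(s)-s|^{p\eta} + |X_{\eta_n(s)}^{(n)}-X_s^{(n)}|^{p\beta})$, with $|\eta_n(s)-s|^{p\eta}=O(n^{-p\eta})$.

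The decisive quantity is the one-step increment in $(\mathrm{iii})$. Splitting $X_s^{(n)}-X_{\eta_n(s)}^{(n)}$ into its drift part, of size $\le\|b\|_\infty(T/n)$ and hence $O(n^{-p\beta})$, and its noise part $L_s-L_{\eta_n(s)}$, the dichotomy of the theorem arises exactly here: Remark \ref{important} gives $\e[|L_s-L_{\eta_n(s)}|^{p\beta}]=O(n^{-1})$ when $p\beta\ge2$, whereas Jensen's inequality together with $\e[|L_t|^2]=ct$ gives $\e[|L_s-L_{\eta_n(s)}|^{p\beta}]\le(\e[|L_s-L_{\eta_n(s)}|^2])^{p\beta/2}=O(n^{-p\beta/2})$ when $p\beta<2$; since $\eta\ge1/2>\beta/2$, the time term $n^{-p\eta}$ is always dominated. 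Collecting the four contributions on $[T_j,T_{j+1}]$, applying Gronwall in $t$, and iterating over the finitely many $j$ yields the stated rate for $p\ge2$, and the range $p\in[1,2)$ follows by Jensen's inequality from the case $p=2$. I expect the main difficulty to be exactly this sharp analysis of term $(\mathrm{iii})$ — the two competing moment scalings of $L_s-L_{\eta_n(s)}$ force the split $p\beta\gtrless2$ — combined with checking that the Kunita bound for $(\mathrm{ii})$ remains of Gronwall type without degrading the rate.
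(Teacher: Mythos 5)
Your proposal is correct in substance and follows essentially the same route as the paper's proof: the same splitting of $[0,T]$ via Lemma \ref{cor:stable:2}, the same backward Kolmogorov equation from Corollary \ref{cor:stable:1} with source $b$, the same It\^o-formula representations and decomposition of $X_t-X_t^{(n)}$, the same mean-value/second-derivative bound on the jump integrand followed by Kunita's inequality, the same moment dichotomy for $X_s^{(n)}-X_{\eta_n(s)}^{(n)}$ (which is exactly Lemma \ref{lem:stand} (ii) applied with exponent $p\beta$), and the same Gronwall-plus-induction closure over the subintervals. The one quantitative slip is your fixed choice $\|\nabla u\|_{\infty}\leq 1/4$: after summing over the $d$ components and over the terms of the decomposition before taking $p$-th powers, the coefficient multiplying $\left|X_t-X_t^{(n)}\right|^p$ from term $(\mathrm{i})$ is of order $d^{p/2}4^{p-1}\varepsilon^p$, which with $\varepsilon=1/4$ equals $d^{p/2}/4$ and exceeds $1$ already for moderate $d$ and $p$, so the absorption into the left-hand side fails; since Lemma \ref{cor:stable:2} holds for arbitrary $\varepsilon\in(0,1)$, you should instead choose $\varepsilon=\varepsilon(p,d)$ small enough that this coefficient is below $1$, exactly as the paper does when it requires $d^{p/2-1}5^{p-1}\varepsilon^p<1$.
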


\begin{Rem}
Let $L$ be a $d$-dimensional truncated symmetric $\alpha$-stable process with $\alpha \in (1,2)$.
Choose the drift coefficient $b$ of the form
\begin{align*}
	b(t,x)=\sum_{i=1}^{m}f_i(t)b_i(x),
\end{align*}
where $f_i$ are continuous functions and $b_i$ are as in Theorem \ref{thm:2} for $i=1,\ldots,m$.
Then the SDE \eqref{SDE_1} has a unique strong solution on small interval $[0,T]$ (see \cite[Theorem 23 and Remark 24]{HaPr}).
In Remark \ref{important}, we notice that for all $L^p$-integrable L\'evy processes for some $p \geq 2$, the $p$-th moment is always bounded by $C_p t$.
Therefore, the rate of $L^p$-convergence with $p \geq 2$ and $p \beta \geq 2$ coincides.
\end{Rem}

	


\section{Proof of main theorems}\label{sec:proof}

This section is devoted to the proof of the main results. The constants $C_1, C_2, C_3$ and $C$ are assumed to be positive and independent of $n$.
Unless explicitly stated otherwise, the constants $C_1, C_2, C_3$ and $C$ depend only on $K, M, T, d, p, x_0, \alpha, \beta, \lambda, \eta$ and $\|b\|_{C_b^{\beta}([0,T]})$.
Moreover, the constant $C$ may change from line to line.

We define
\begin{align*}
X_t:=(X_t^1, \ldots, X_t^d)^\ast
\text{ and }
X_t^{(n)}:=(X_t^{(1,n)},\ldots,X_t^{(d,n)})^\ast.
\end{align*}

The following estimation is standard.
For the convenience of the reader, we will give a proof.

\begin{Lem}\label{lem:stand}
Suppose that the drift coefficient $b$ is bounded and measurable.

\begin{itemize}
	\item[(i)]	If $L=W$ is a $d$-dimensional Wiener process, then for any $p >0$, there exists $C>0$ such that for any $t \in [0,T]$,
	\begin{align*}
	\e\left[\left|X_t^{(n)}-X_{\eta_n(t)}^{(n)}\right|^p\right]
	\leq
	\frac{C}{n^{p/2}}.
	\end{align*}
	
		
	\item[(ii)] If $L$ is a $d$-dimensional truncated symmetric $\alpha$-stable process with $\alpha \in (1,2)$ and $d \geq 2$, then for any $p >0$, there exists $C>0$ such that for any $t \in [0,T]$,
	\begin{align*}
	\e\left[\left|X_t^{(n)}-X_{\eta_n(t)}^{(n)}\right|^p\right]
	\leq
	\left\{ \begin{array}{ll}
	\displaystyle \frac{C}{n} &\text{ if } p \geq 2,  \\
	\displaystyle \frac{C}{n^{p/2}} &\text{ if } p <2.
	\end{array}\right.
	\end{align*}
	
\end{itemize}
\end{Lem}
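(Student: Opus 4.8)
The plan is to estimate the one-step increment of the Euler--Maruyama scheme directly from its defining equation \eqref{EM_1}. For any $t \in [0,T]$, subtracting the value at $\eta_n(t)$ gives
\begin{align*}
X_t^{(n)}-X_{\eta_n(t)}^{(n)}
= \int_{\eta_n(t)}^{t} b\bigl(\eta_n(s), X_{\eta_n(s)}^{(n)}\bigr)\diffns s
+ \bigl(L_t - L_{\eta_n(t)}\bigr).
\end{align*}
Since $t-\eta_n(t) \le T/n$, the two terms will be treated separately by the elementary inequality $|a+b|^p \le 2^{p-1}(|a|^p+|b|^p)$ (for $p\ge 1$; for $0<p<1$ one uses subadditivity of $x\mapsto x^p$ instead).

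First I would bound the drift term. Because $b$ is bounded, $\bigl|\int_{\eta_n(t)}^t b(\eta_n(s),X^{(n)}_{\eta_n(s)})\diffns s\bigr| \le \|b\|_\infty (t-\eta_n(t)) \le \|b\|_\infty T/n$, so its $p$-th power is $O(n^{-p})$, which is negligible compared with the claimed rates. The main contribution therefore comes entirely from the noise increment $L_t - L_{\eta_n(t)}$. In case (i), $L=W$ is a Wiener process, and by stationarity of increments together with Gaussian scaling, $\e[|W_t-W_{\eta_n(t)}|^p] = \e[|W_{t-\eta_n(t)}|^p] = C_p (t-\eta_n(t))^{p/2} \le C_p (T/n)^{p/2}$, which is exactly the $O(n^{-p/2})$ bound. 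Combining with the drift estimate, and noting $n^{-p} \le C n^{-p/2}$, yields (i).

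For case (ii) the driving noise is the truncated symmetric $\alpha$-stable process, and here the key input is Remark \ref{important}: since every moment $\e[|L_t|^p]$ is finite (the truncated process is $L^p$-integrable for all $p$), there is $C_p$ with $\e[\sup_{0\le s\le t}|L_s|^p] \le C_p t$ for $t\in[0,1]$. Applying this to the stationary increment $L_t - L_{\eta_n(t)} \stackrel{d}{=} L_{t-\eta_n(t)}$ gives, for $p\ge 2$,
\begin{align*}
\e\bigl[|L_t-L_{\eta_n(t)}|^p\bigr] \le C_p\,(t-\eta_n(t)) \le \frac{C_p T}{n},
\end{align*}
which is the $O(n^{-1})$ bound. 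For $p<2$ one cannot use Remark \ref{important} directly; instead I would invoke Jensen's (or Hölder's) inequality to pass from the $p=2$ estimate $\e[|L_t-L_{\eta_n(t)}|^2]\le C/n$ down to $\e[|L_t-L_{\eta_n(t)}|^p] \le (\e[|L_t-L_{\eta_n(t)}|^2])^{p/2} \le (C/n)^{p/2} = O(n^{-p/2})$, matching the stated rate. Adding back the $O(n^{-p})$ drift term (again dominated by both rates) completes case (ii).

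The only genuinely delicate point is the $p<2$ regime in part (ii): the crude moment bound of Remark \ref{important} is stated for $p\ge 2$, so one must not expect a $1/n$ rate there but rather descend via the $L^2$ estimate through Jensen, which is precisely why the two cases exhibit different exponents. Everything else is a routine combination of boundedness of $b$, stationarity of the increments, and the moment bounds already established in the preliminaries, so I do not anticipate any further obstacle.
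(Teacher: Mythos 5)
Your proposal is correct and follows essentially the same route as the paper: the same decomposition of the one-step increment into the bounded drift term of order $n^{-p}$ plus the stationary noise increment, Gaussian scaling for case (i), and Remark \ref{important} (Protter's moment bound) for $p\ge 2$ in case (ii). The only difference is cosmetic: the paper disposes of the $p<2$ regime with the one-line remark that it suffices to treat $p\ge 2$, whereas you spell out the underlying Jensen argument explicitly.
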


\begin{proof}
From the definition of the Euler-Maruyama approximation, it holds from boundedness of $b$ that
\begin{align*}
|X_t^{(n)}-X_{\eta_n(t)}^{(n)}|
\leq \frac{T\|b\|_{\infty}}{n}
+|L_t-L_{\eta_n(t)}|.
\end{align*}

Suppose $L=W$.
Then for any $p \geq 1$,
\begin{align*}
\e\left[\left|X_t^{(n)}-X_{\eta_n(t)}^{(n)}\right|^p\right]
\leq \frac{2^{p-1} T^p \|b\|_{\infty}^p}{n^p}
+2^{p-1}\e\left[\left|W_t-W_{\eta_n(t)}\right|^p\right]
\leq \frac{C}{n^{p/2}},
\end{align*}
Hence the statement is true in the case of Wiener process.


Suppose that $L$ is a truncated symmetric $\alpha$-stable process.
It is enough to prove the statement for $p \geq 2$.
Since $\e[|L_t|^{p}]<\infty$ it follows from \cite[Theorem 66]{Protter} that there exists $C$ such that
\begin{align*}
\e[|L_t-L_{\eta_n(t)}|^p]
=\e[|L_{t-\eta_n(t)}|^p]
&\leq \frac{C}{n}.
\end{align*}
Hence the result follows when $L$ is a truncated symmetric $\alpha$-stable process.
\end{proof}

\subsection{Proof of Theorem \ref{main_1.5}}

For a given $\varepsilon \in (0,1)$, we consider the partition  $(T_j)_{j=0,\ldots,m}$ of closed interval $[0,T]$ which is considered in Lemma \ref{cor:2}.
For $i=1,\ldots,d$ and $j=1,\ldots,m$, Corollary \ref{cor:1} implies that there exists at least one solution $u_{i,j}$ to the backward Kolmogorov equation:
\begin{align*}
\frac{\partial u_{i,j}}{\partial t}  + \nabla u_{i,j} \cdot b + \frac{1}{2} \Delta u_{i,j}=-b_i \text{ on } [T_{j-1},T_{j}] \times \real^d,~ u_{i,j}(T_{j},x)=0
\end{align*}
and $u_{i,j}$ satisfies,
\begin{align*}
\|\nabla u_{i,j}\|_{C_b^{\beta}[T_{j-1},T_j]}
\leq C_0\cdot(T_j-T_{j-1})^{1/2}\|b_j\|_{C_b^{\beta}([T_{j-1},T_j])}
\leq C_0\cdot(T_j-T_{j-1})^{1/2}\|b\|_{C_b^{\beta}([0,T])}
\leq \varepsilon.
\end{align*}
For any $t \in [T_{j-1},T_{j}]$ by  It\^o's formula, we have
\begin{align*}
u_{i,j}(t,X_t)
&=
u_{i,j}(T_{j-1},X_{T_{j-1}})
+\int_{T_{j-1}}^t \frac{\partial u_{i,j}}{\partial t} (s,X_s) \diffns s
+\int_{T_{j-1}}^t \nabla u_{i,j} (s,X_s) \diffns X_s
+\frac{1}{2} \int_{T_{j-1}}^t \Delta u_{i,j} (s,X_s) \diffns s \\
&=u_{i,j}(T_{j-1},X_{T_{j-1}})
-\int_{T_{j-1}}^t b_i(s,X_s) \diffns s
+\int_{T_{j-1}}^t \nabla u_{i,j}(s,X_s) \diffns W_s.
\end{align*}
Hence we have
\begin{align}\label{rep:wiener2}
\int_{T_{j-1}}^t b_i(s,X_s) \diffns s
=u_{i,j}(T_{j-1},X_{T_{j-1}})
-u_{i,j}(t,X_t)
+\int_{T_{j-1}}^t \nabla u_{i,j}(s,X_s) \diffns W_s.
\end{align}
In the same way, we have
\begin{align}\label{rep:wiener_EM2}
\int_{T_{j-1}}^t b_i(s,X_s^{(n)}) \diffns s
&=u_{i,j}(T_{j-1},X_{T_{j-1}}^{(n)})
-u_{i,j}(t,X_t^{(n)})
+\int_{T_{j-1}}^t \nabla u_{i,j}(s,X_s^{(n)}) \diffns W_s \nonumber\\
&+\int_{T_{j-1}}^t \nabla u_{i,j}(s,X_s^{(n)}) \cdot \left( b_i(\eta_n(s), X_{\eta_n(s)}^{(n)})-b_i(s, X_s^{(n)}) \right) \diffns s.
\end{align}
It follows from \eqref{rep:wiener2} and \eqref{rep:wiener_EM2} that for any $i=1,\ldots, d$,
\begin{align*}
X_t^i-X_t^{(n,i)}
&=X_{T_{j-1}}^i-X_{T_{j-1}}^{(n,i)}+\int_{T_{j-1}}^t \left( b_i(s,X_s)-
b_i(\eta_n(s),X_{\eta_n(s)}^{(n)})\right)\diffns s\\
&=X_{T_{j-1}}^i-X_{T_{j-1}}^{(n,i)}\\
&+\left( u_{i,j}(T_{j-1},X_{T_{j-1}})-u_{i,j}(T_{j-1},X_{T_{j-1}}^{(n)}) \right)
- \left( u_{i,j}(t,X_{t})-u_{i,j}(t,X_t^{(n)}) \right) \\
&+\int_{T_{j-1}}^t \left( \nabla u_{i,j}(s, X_s) - \nabla u_{i,j}(s,X_s^{(n)}) \right) \diffns W_s \\
&+\int_{T_{j-1}}^t \nabla u_{i,j}(s,X_s^{(n)}) \cdot \left( b(s, X_{s}^{(n)})-b(\eta_n(s), X_{\eta_n(s)}^{(n)}) \right) \diffns s\\
&+\int_{T_{j-1}}^t \left( b_i(s, X_s^{(n)}) - b_i(\eta_n(s), X_{\eta_n(s)}^{(n)}) \right)\diffns s.
\end{align*}
Since $\|\nabla u_{i,j}\|_{C_b^{\beta}([T_{j-1},T_j])} \leq \varepsilon$, by the mean-value theorem, we have
\begin{align*}
\left|X_t^i-X_t^{(n,i)}\right|
&\leq
\left|X_{T_{j-1}}^i-X_{T_{j-1}}^{(n,i)}\right| \\
&+\left| u_{i,j}(T_{j-1},X_{T_{j-1}})-u_{i,j}(T_{j-1},X_{T_{j-1}}^{(n)}) \right|
+ \left| u_{i,j}(t,X_{t})-u_{i,j}(t,X_t^{(n)}) \right| \\
&+ \left| \int_{T_{j-1}}^t \left( \nabla u_{i,j}(s, X_s) - \nabla u_{i,j}(s,X_s^{(n)}) \right) \diffns W_s \right| \\
&+\int_{T_{j-1}}^t \left| \nabla u_{i,j}(s,X_s^{(n)}) \right| \left| b(s, X_{s}^{(n)})-b(\eta_n(s), X_{\eta_n(s)}^{(n)}) \right| \diffns s\\
&+\int_{T_{j-1}}^t \left| b_i(s, X_s^{(n)}) - b_i(\eta_n(s), X_{\eta_n(s)}^{(n)}) \right|\diffns s\\
&\leq
(1+\varepsilon) \left|X_{T_{j-1}}-X_{T_{j-1}}^{(n)}\right|
+ \varepsilon \left|X_{t}-X_{t}^{(n)}\right|
+\left| \int_{T_{j-1}}^t \left( \nabla u_{i,j}(s, X_s) - \nabla u_{i,j}(s,X_s^{(n)}) \right) \diffns W_s \right| \\
&+ ( 1+ \varepsilon ) K \int_{T_{j-1}}^t \left| X_{s}^{(n)}-X_{\eta_n(s)}^{(n)} \right|^{\beta} \diffns s
+ (1+\varepsilon) K (t-T_{j-1}) \left( \frac{T}{n} \right)^{\eta}.
\end{align*}
For $p \geq 2$, using Jensen's and H\"older inequalities, we have
\begin{align*}
\left|X_t-X_t^{(n)}\right|^p
&= \left( \sum_{i=1}^d \left|X_t^{i}-X_t^{(n,i)}\right|^2 \right)^{p/2}
\leq d^{p/2-1} \sum_{i=1}^d |X_t^{i}-X_t^{(n,i)}|^p\\
&\leq
d^{p/2} 5^{p-1} (1+\varepsilon)^p \left|X_{T_{j-1}}-X_{T_{j-1}}^{(n)}\right|^p
+ d^{p/2} 5^{p-1} \varepsilon^p \left|X_{t}-X_{t}^{(n)}\right|^p \\
&+ d^{p/2-1} 5^{p-1} \sum_{i=1}^d \left| \int_{T_{j-1}}^t \left( \nabla u_{i,j}(s, X_s) - \nabla u_{i,j}(s,X_s^{(n)}) \right) \diffns W_s \right|^p \\
&+ d^{p/2} 5^{p-1} ( 1+ \varepsilon )^p K^p (t-T_{j-1})^{p-1} \int_{T_{j-1}}^t \left| X_{s}^{(n)}-X_{\eta_n(s)}^{(n)} \right|^{p \beta} \diffns s  \\
&+ d^{p/2} 5^{p-1} (1+\varepsilon)^p K^p (t-T_{j-1})^p \left( \frac{T}{n} \right)^{p \eta}.
\end{align*}
Since $\varepsilon >0$ is arbitrary, it may be chosen such that $c(p,d,\varepsilon):=d^{p/2-1} 5^{p-1} \varepsilon^p<1$.
Then we have
\begin{align}\label{esti:Lp:11}
\left|X_t-X_t^{(n)}\right|^p
&\leq
\frac{d^{p/2} 5^{p-1} (1+\varepsilon)^p}{(1-c(p,d,\varepsilon))} \left|X_{T_{j-1}}-X_{T_{j-1}}^{(n)}\right|^p \nonumber \\
&+ \frac{d^{p/2-1} 5^{p-1}}{(1-c(p,d,\varepsilon))} \sum_{i=1}^d \left| \int_{T_{j-1}}^t \left( \nabla u_{i,j}(s, X_s) - \nabla u_{i,j}(s,X_s^{(n)}) \right) \diffns W_s \right|^p \nonumber \\
&+ \frac{d^{p/2} 5^{p-1} ( 1+ \varepsilon )^p K^p(t-T_{j-1})^{p-1}}{(1-c(p,d,\varepsilon))}\int_{T_{j-1}}^t \left| X_{s}^{(n)}-X_{\eta_n(s)}^{(n)} \right|^{p\beta} \diffns s \nonumber \\
&+ \frac{d^{p/2} 5^{p-1} (1+\varepsilon)^p K^p (t-T_{j-1})^p}{(1-c(p,d,\varepsilon))} \left( \frac{T}{n} \right)^{p \eta}.
\end{align}
Taking the supremum and then expectation on both sides of \eqref{esti:Lp:11}, we have from Burkholder-Davis-Gundy's inequality and Jensen's inequality that
\begin{align*}
\e\left[\sup_{T_{j-1} \leq u \leq t}\left|X_u-X_u^{(n)} \right|^p \right]
&\leq
\frac{d^{p/2} 5^{p-1} (1+\varepsilon)^p}{(1-c(p,d,\varepsilon))} \e\left[\left|X_{T_{j-1}}-X_{T_{j-1}}^{(n)}\right|^p \right] \nonumber \\
&+ \frac{d^{p/2-1} 5^{p-1} C(p,d) T^{\frac{p}{2}-1} }{(1-c(p,d,\varepsilon))} \sum_{i=1}^d \int_{T_{j-1}}^t \e\left[  \left| \nabla u_{i,j}(s, X_s) - \nabla u_{i,j}(s,X_s^{(n)}) \right|^p \right]\diffns s \nonumber \\
&+ \frac{d^{p/2} 5^{p-1} ( 1+ \varepsilon )^p K^p T^{p-1}}{(1-c(p,d,\varepsilon))} \int_{T_{j-1}}^t \e\left[ \left| X_{s}^{(n)}-X_{\eta_n(s)}^{(n)} \right|^{p\beta} \right]\diffns s \nonumber \\
&+ \frac{d^{p/2} 5^{p-1} (1+\varepsilon)^p (KT)^p}{(1-c(p,d,\varepsilon))} \left( \frac{T}{n} \right)^{p \eta}\nonumber,
\end{align*}
where $C(p,d)$ is the constant in Burkholder-Davis-Gundy's inequality.
From Lemma \ref{lem:stand} (i), we have
\begin{align*}
\e\left[\sup_{T_{j-1} \leq u \leq t}\left|X_u-X_u^{(n)} \right|^p \right]
&\leq
\frac{d^{p/2} 5^{p-1} \varepsilon^p}{(1-c(p,d,\varepsilon))} \e\left[\left|X_{T_{j-1}}-X_{T_{j-1}}^{(n)}\right|^p \right] \nonumber \\
&+ \frac{d^{p/2} 5^{p-1} C(p,d) T^{\frac{p}{2}-1}\varepsilon^p}{(1-c(p,d,\varepsilon))} \int_{T_{j-1}}
^t \e\left[\sup_{T_{j-1} \leq u \leq s }\left|X_u-X_u^{(n)} \right|^p\right]\diffns s \nonumber \\
&+ \frac{d^{p/2} 5^{p-1} ( 1+ \varepsilon )^p (KT)^p }{(1-c(p,d,\varepsilon))} \frac{C+T^{p \eta}}{n^{p \beta /2}}\\
&=C_1 \e\left[\left|X_{T_{j-1}}-X_{T_{j-1}}^{(n)}\right|^p \right]
+C_2 \int_{T_{j-1}}^t \e\left[ \sup_{T_{j-1} \leq u \leq s }\left|X_u-X_u^{(n)} \right|^p \right]\diffns s
+\frac{C_3}{n^{p \beta /2}}.
\end{align*}
Next, we prove by induction that for each $j=1,\ldots,m$,
\begin{align}\label{esti_T}
\e\left[\sup_{T_{j-1} \leq u \leq t}\left|X_u-X_u^{(n)} \right|^p \right]
&\leq
\frac{A_j}{n^{p \beta /2}},~t \in (T_{j-1},T_{j}],
\end{align}
where $A_1:=C_3e^{C_2T}$ and $A_j:=(C_1A_{j-1} +C_3)e^{C_2 T}$ for $j=2,\ldots, m$.
If $j=1,$ since $T_0=0$, for any $t \in (0,T_1]$, we have
\begin{align*}
\e\left[\sup_{0 \leq u \leq t}\left|X_u-X_u^{(n)} \right|^p \right]
&\leq
C_2 \int_{0}^t \e\left[\sup_{0 \leq u \leq s }\left|X_u-X_u^{(n)} \right|^p \right]\diffns s
+ \frac{C_3}{n^{p \beta /2}}.
\end{align*}
By Gronwall's inequality, we have
\begin{align*}
\e\left[\sup_{0 \leq u \leq t }\left|X_u-X_u^{(n)} \right|^p \right]
&\leq
\frac{C_3e^{C_2 T}}{n^{p \beta/2}}.
\end{align*}
Assume that \eqref{esti_T} holds for $j=1,\ldots,i-1$ with $2 \leq i \leq m$.
Then, for any $t \in (T_{i-1},T_i]$, we have
\begin{align*}
\e\left[\sup_{T_{i-1} \leq u \leq t}\left|X_u-X_u^{(n)} \right|^p\right]
&\leq
C_1 \e\left[\left|X_{T_{i-1}}-X_{T_{i-1}}^{(n)}\right|^p \right]
+C_2 \int_{T_{i-1}}^t \e\left[ \sup_{T_{i-1} \leq u \leq s}\left|X_u-X_u^{(n)} \right|^p \right]\diffns s
+\frac{C_3}{n^{p \beta /2}}\\
&\leq
C_2 \int_{T_{i-1}}^t \e\left[\sup_{T_{i-1} \leq u \leq s}\left|X_u-X_u^{(n)} \right|^p \right]\diffns s
+\frac{C_1A_{i-1}+C_3}{n^{p \beta /2}}.
\end{align*}
Using once more Gronwall's inequality, we have
\begin{align*}
\e\left[\sup_{T_{i-1} \leq u \leq t }\left|X_u-X_u^{(n)} \right|^p \right]
&\leq
\frac{(C_1A_{i-1} +C_3)e^{C_2 T}}{n^{p \beta/2}}
=\frac{A_i}{n^{p \beta/2}}.
\end{align*}
Hence \eqref{esti_T} holds for any $j=1,\ldots,m$ and  we have
\begin{align*}
	\e\left[\sup_{0 \leq s \leq T} \left|X_s-X_s^{(n)} \right|^p \right]
	&\leq \sum_{j=1}^{m}\e\left[\sup_{T_{j-1} \leq u \leq T_{j} }\left|X_u-X_u^{(n)} \right|^p \right]
	\leq \frac{1}{n^{p\beta}} \sum_{j=1}^{m} A_j.
\end{align*}
This concludes the statement of Theorem \ref{main_1.5}.
\qed

\subsection{Proof of Theorem \ref{main_2}}


Let us first remark that for $p \in [1,2)$, the $L^p$-norm is bounded by $L^2$-norm. Hence it is sufficient to prove the statement for $p \geq 2$.
As Theorem \ref{main_1.5}, let $\varepsilon \in (0,1)$ be given and consider the partition  $(T_j)_{j=0,\ldots,m}$ of closed interval $[0,T]$ defined in Lemma \ref{cor:stable:2}.
For $i=1,\ldots,d$ and $j=1,\ldots,m$, Corollary \ref{cor:stable:1} implies that there exists at least one solution $u_{i,j}$ to the backward Kolmogorov equation:
\begin{align*}
	\frac{\partial u_{i,j}}{\partial t}  + \nabla u_{i,j} \cdot b + \mathcal{L} u_{i,j}=-b_i \text{ on } [T_{j-1},T_{j}] \times \real^d,~ u_{i,j}(T_{j},x)=0
\end{align*}
and $u_{i,j}$ satisfies,
\begin{align*}
	\|\nabla u_{i,j}\|_{C_b^{\beta}([T_{j-1},T_j])}
	\leq C_0(T_j-T_{j-1})^{1-1/\alpha} \|b_j\|_{C_b^{\beta}([T_{j-1},T_j])}
	\leq C_0(T_j-T_{j-1})^{1-1/\alpha} \|b\|_{C_b^{\beta}([0,T])}
	\leq \varepsilon
\end{align*}
and
\begin{align}\label{esti_D2_u}
\|D^2u_{i,j}\|_{\infty} \leq M \|b\|_{C_b^{\beta}([0,T])}.
\end{align}
For any $t \in [T_{j-1},T_j]$, using It\^o's formula and Kolmogorov equation, we have
\begin{align}\label{rep:trun_stable}
\int_{T_{j-1}}^t b_i(s,X_s) \diffns s
&=u_{i,j}(T_{j-1},X_{T_j})-u_{i,j}(t,X_t) \notag \\
&+\int_{T_{j-1}}^t \int_{|z|\leq 1} \{ u_{i,j}(s,X_{s-} + z)-u_{i,j}(s,X_{s-}) \} \widetilde{N}(\diffns s,\diffns z)
\end{align}
and
\begin{align}\label{rep:trun_stable_EM}
\int_{T_{j-1}}^t b_i(s,X_s^{(n)}) \diffns s
&=u_{i,j}(T_{j-1},X_{T_j}^{(n)})-u_{i,j}(t,X_t^{(n)}) \notag\\
&+\int_{T_{j-1}}^t \int_{|z|\leq 1} \{ u_{i,j}(s,X_{s-}^{(n)} +z)-u_{i,j}(s,X_{s-}	^{(n)}) \} \widetilde{N}(\diffns s,\diffns z) \nonumber\\
&+\int_{T_{j-1}}^t \nabla u_{i,j} (s,X_s^{(n)}) \cdot (b(\eta_n(s), X_{\eta_n(s)}^{(n)}) - b(s, X_{s}^{(n)})) \diffns s.
\end{align}
Combining\eqref{rep:trun_stable} and \eqref{rep:trun_stable_EM} and using similar arguments as in the case of Wiener process we have,
\begin{align}\label{esti:Lp:1_trun}
	\left|X_t-X_t^{(n)}\right|^p
	&\leq
	\frac{d^{p/2} 5^{p-1} (1+\varepsilon)^p}{(1-c(p,d,\varepsilon))} \left|X_{T_{j-1}}-X_{T_{j-1}}^{(n)}\right|^p \nonumber \\
	&+ \frac{d^{p/2-1} 5^{p-1}}{(1-c(p,d,\varepsilon))} \sum_{i=1}^d \left| \int_{T_{j-1}}^t \int_{|z|\leq 1} H_{i,j}(s,z) \widetilde{N}(\diffns s,\diffns z) \right|^p \nonumber \\
	&+ \frac{d^{p/2} 5^{p-1} ( 1+ \varepsilon )^p K^p(t-T_{j-1})^p}{(1-c(p,d,\varepsilon))}\int_{T_{j-1}}^t \left| X_{s}^{(n)}-X_{\eta_n(s)}^{(n)} \right|^{p\beta} \diffns s \nonumber \\
	&+ \frac{d^{p/2} 5^{p-1} (1+\varepsilon)^p K^p (t-T_{j-1})^p}{(1-c(p,d,\varepsilon))} \left( \frac{T}{n} \right)^{p \eta},
\end{align}
where
\begin{align*}
	H_{i,j}(s,z)
	:=\left\{ u_{i,j}(s,X_{s-} +z)-u_{i,j}(s,X_{s-}) \right\}
	-\left\{ u_{i,j}(s,X_{s-}^{(n)} +z)-u_{i,j}(s,X_{s-}^{(n)}) \right\}.
\end{align*}
Using twice the mean value theorem, we have
\begin{align*}
H_{i,j}(s,z)
&= \sum_{k=1}^d \int_0^1
(X_{s-}^k-X_{s-}^{(n,k)}) \\
&\times \left\{ \frac{\partial u_{i,j}}{\partial x_k} \left(s,X_{s-}^{(n)} + z+\tau(X_{s-}-X_{s-}^{(n)})\right) - \frac{\partial u_{i,j}}{\partial x_k}\left(s,X_{s-}^{(n)} + \tau(X_{s-}-X_{s-}^{(n)}) \right) \right\} \diffns \tau\\
&=\int_0^1 \diffns \theta \int_0^1 \diffns \tau \sum_{k,\ell=1}^d (X_{s-}^k-X_{s-}^{n,k}) z_{\ell} \frac{\partial^2 u_{i,j}}{\partial x_k \partial x_{\ell}} \left(X_{s-}^{(n)}+\tau (X_{s-}-X_{s-}^{(n)}) + \theta z \right).
\end{align*}
From Kunita's inequality (see \cite[Theorem 4.4.23]{Applebaum}) and \eqref{esti_D2_u}, we have
\begin{align}\label{esti_trun_st_int}
&\e\left[ \sup_{T_{j-1} \leq u\leq t} \left|\int_{T_{j-1}}^u \int_{|z|\leq 1} H_{i,j}(s,z) \widetilde{N}(\diffns s,\diffns z) \right|^p \right] \notag\\
&\leq C\e \left[ \left( \int_{T_{j-1}}^t \int_{|z|\leq 1} |H_{i,j}(s,z)|^2 \nu(\diffns z) \diffns s \right)^{p/2} \right]
+ C\e \left[ \int_{T_{j-1}}^t \int_{|z|\leq 1} |H_{i,j}(s,z)|^p \nu(\diffns z) \diffns s \right] \notag\\
&\leq C\e \left[ \left( \int_{T_{j-1}}^t \int_{|z|\leq 1} \left|X_s-X_s^{(n)}\right|^2 |z|^2 \nu(\diffns z) \diffns s \right)^{p/2} \right]
+ C\e \left[ \int_{T_{j-1}}^t \int_{|z|\leq 1} \left|X_s-X_s^{(n)}\right|^p |z|^p \nu(\diffns z) \diffns s \right] \notag\\
&\leq C \int_{T_{j-1}}^t \e\left[\left|X_s-X_s^{(n)}\right|^p\right]\diffns s.
\end{align}
Taking the supremum and then the expectation on both sides of \eqref{esti:Lp:1_trun} and using \eqref{esti_trun_st_int} and Lemma \ref{lem:stand} (ii), we have
\begin{align*}
&\e\left[\sup_{T_{j-1} \leq u \leq t}\left|X_u-X_u	^{(n)}\right|^p \right]\\
&\leq C_1 \e\left[\left|X_{T_{j-1}}-X_{T_{j-1}}^{(n)}\right|^p \right]
+C_2 \int_{T_{j-1}}^t \e\left[ \sup_{T_{j-1} \leq u \leq s} \left| X_u - X_u^{(n)} \right|^{p} \right]\diffns s\\
&+C \int_{T_{j-1}}^t \e\left[\left|X_s^{(n)}-X_{\eta_n(s)}^{(n)}\right|^{p \beta}\right]\diffns s
+ \frac{C}{n^{p\eta}}\\
&\leq C_1 \e\left[\left|X_{T_{j-1}}-X_{T_{j-1}}^{(n)}\right|^p \right]
+C_2 \int_{T_{j-1}}^t \e\left[ \sup_{T_{j-1} \leq u \leq s} \left| X_u - X_u^{(n)} \right|^{p} \right]\diffns s
+ \left\{ \begin{array}{ll}
\displaystyle \frac{C_3}{n} &\text{ if } p \beta \geq 2,  \\
\displaystyle \frac{C_3}{n^{p\beta/2}} &\text{ if } 1 \leq p \beta < 2.
\end{array}\right.
\end{align*}
Arguing as in the proof of \eqref{esti_T}, we have that for $t \in (T_{j-1},T_{j}]$,
\begin{align*}
\e\left[\sup_{T_{j-1} \leq u \leq t}\left|X_u-X_u	^{(n)}\right|^p\right]
&\leq
\left\{ \begin{array}{ll}
\displaystyle \frac{A_j}{n} &\text{ if } p \beta \geq 2,  \\
\displaystyle \frac{A_j}{n^{p\beta/2}} &\text{ if } 1 \leq p \beta < 2,
\end{array}\right.
\end{align*}
where $A_1:=C_3e^{C_2T}$ and $A_j:=(C_1A_{j-1} +C_3)e^{C_2 T}$ for $j=2,\ldots, m$.
This concludes the statement of Theorem \ref{main_2}.
\qed

\section*{Acknowledgements}

The Authors would like to thank an anonymous referee for his/her helpful comments.

This work was initiated when the first author visited Ritsumeikan University.
He will like to thank Prof Jir\^o Akahori for his hospitality and encouragement.
He also wants to thank the stochastic analysis group for providing nice working environment.

The second author is very grateful to Professor Arturo Kohatsu-Higa for his supports and encouragement.

The research leading to this result has received funding from the European Union's Seventh Framework Programme for research, technological development and demonstration under grant agreement no 318984-RARE

\end{document}